\numberwithin{equation}{section}
\definecolor{labelkey}{rgb}{1,0,1}
\definecolor{refkey}{gray}{1}
\newtheorem{thm}{Theorem}[section]
\newtheorem{lemma}[thm]{Lemma}
\newtheorem{prop}[thm]{Proposition}
\newtheorem{cor}[thm]{Corollary}
\theoremstyle{definition}
\newtheorem{defn}[thm]{Definition}
\newtheorem{notation}[thm]{Notation}
\theoremstyle{remark}
\newtheorem{example}[thm]{Example}
\numberwithin{equation}{section}
 \newcommand{\ZZ}{\mathbb{Z}}
 \newcommand{\QQ}{\mathbb{Q}}
 \newcommand{\NN}{\mathbb{N}}
\newcommand{\RR}{\mathbb{R}}
\newcommand{\CC}{\mathbb{C}}
\newcommand{\PP}{\mathbb{P}}
 \newcommand{\cB}{\mathcal{B}}
\newcommand{\cG}{\mathcal{G}}
\newcommand{\cN}{\mathcal{N}}
\newcommand{\cO}{\mathcal{O}}
 \newcommand{\D}{\Delta}
\newcommand{\s}{\sigma}
\renewcommand{\S}{\Sigma}
\newcommand{\M}{\overline{M}}
\newcommand{\LM}{\overline{L}}
\newcommand{\Pic}{\mathrm{Pic}}
\newcommand{\cox}{\mathrm{Cox}}
\newcommand{\onetn}{\{1, \ldots, n\}}
\newcommand{\Star}{\mathrm{Star}}
\newcommand{\orthb}{\QQ_{\geq 0}^{|\D|}}
\newcommand{\orthNb}{\NN^{|\D|}}
\newcommand{\qbnd}{\QQ^{|\D|}}
\newcommand{\cl}{\mathrm{cl}}
\newcommand{\fanptwo}{%

		\psset{unit=1.9cm}
	\pspicture(-1.5,-1)(1.5,1.3)
	\psline{->}(0, 1)
	\psline{->}(1, 0)
	\psline{->}(-1, -1)

\rput(-1,-0.8){$\rho_{23}$}
\rput(1,-.15){$\rho_{13}$}
\rput(0.15,1.0){$\rho_{12}$}
	\endpspicture

}
\newcommand{\fanlthree}{%

		\psset{unit=1.9cm}
	\pspicture(-1.5,-1)(1.5,1.3)
	\psline{->}(0, 1)
	\psline{->}(1, 0)
	\psline{->}(-1, -1)
	\psline{->}(1, 1)
	\psline{->}(0, -1)
	\psline{->}(-1,0)

\rput(-1,-0.8){$\rho_{23}$}
\rput(1,-.15){$\rho_{13}$}
\rput(0.15,1.0){$\rho_{12}$}
\rput(1.0,0.8){$\rho_1$}
\rput(-1, -0.1){$\rho_2$}
\rput(0.15,-1){$\rho_3$}
	\endpspicture

}
\author{Paul Larsen}
\title[Permutohedral spaces and the Cox ring of $\overline{M}_{0,\lowercase{n}}$]{Permutohedral spaces and the Cox ring of the moduli space of stable pointed rational curves}
\date{\today}
\address{Humboldt-Universit\"at zu Berlin, Institut f\"ur Mathematik, 10099 Berlin, Germany}
\email{larsen@mathematik.hu-berlin.de}
\numberwithin{equation}{section}
\begin{document}
\selectlanguage{english}
\begin{abstract}
We study the Cox ring of the moduli space of stable pointed rational curves, $\M_{0,n}$, via the closely related permutohedral (or Losev-Manin) spaces $\LM_{n-2}$. Our main result establishes $\binom{n}{2}$ polynomial subrings of $\cox(\M_{0,n})$, thus giving collections of boundary variables that intersect the ideal of relations of $\cox(\M_{0,n})$ trivially. As applications, we give a combinatorial way to partially solve the Riemann-Roch problem for $\M_{0,n}$, and we show that all relations in degrees of $\cox(\M_{0,6})$ arising from certain pull-backs from projective spaces are generated by the Pl\"ucker relations.
\end{abstract}
\maketitle

\tableofcontents

\section{Introduction}
\label{secCox:introduction}
The moduli space of stable pointed rational curves, $\M_{0,n}$, serves as a meeting point for a wide range of mathematics; its study links to higher genus moduli spaces of curves, Gromov-Witten theory and mathematical physics, and phylogenetics. For example, a central open question in the birational geometry of moduli spaces of curves for all genera, the F-conjecture, could be settled by the genus zero case (\cite{MR1887636}).

Recently, Cox rings have provided a useful tool for birational geometry. Introduced in \cite{MR1299003} as a generalization of the homogeneous coordinate ring of projective space to toric varieties, Hu and Keel in \cite{MR1786494} generalized Cox's construction to a broad class of projective varieties (see Definition \ref{def:coxProj}), and proved far-reaching implications for the birational geometry of the variety when its Cox ring is finitely generated. They showed that the projective variety can be recovered as a quotient of the spectrum of its Cox ring by the action of its Picard torus, and that the cones of pseudoffective, nef, and moving divisor classes are polyhedral, admitting chamber decompositions that determine all maps from $X$ to other projective varieties. Hu and Keel dubbed these varieties \emph{Mori dream spaces}, since they are in a sense ideally suited for the minimal model program (for precise statements and details, see \cite{MR1786494}).

Proving finite-generation of Cox rings has therefore garnered significant attention (e.g. \cite{MR2029863}, \cite{mukai}, \cite{MR2278756}, \cite{MR2491903},\cite{MR2601039}). In order to extract information about birational geometry, however, the ideal of relations among generators must also be determined. For $\M_{0,n}$, generators are known only for $n \leq 6$ (\cite{MR2029863},\cite{MR2491903}), and the ideal of relations only for $n\leq 5$ (\cite{MR2029863}).

This paper studies the ideal of relations of the Cox ring of $\M_{0,n}$ as it relates to toric varieties called \emph{permutohedral} or \emph{Losev-Manin} spaces. The main result of this paper is to show that the (polynomial) Cox rings of the permutohedral spaces $\LM_{n-2}$ inject into $\cox(\M_{0,n})$ for all $n$ in a particularly nice way. More specifically, we define a natural identification between the torus-invariant prime divisors of $\LM_{n-2}$ and boundary divisors of $\M_{0,n}$ (see Section \ref{secCox:background} for definitions). Then letting $\{[H], [E_{J}]$: $J \subsetneq \{1, \ldots, n-1\}\}$ denote the Kapranov basis $\Pic(\M_{0,n})$ (see Section \ref{secCox:M0nLnm2}), our main result is
\begin{thm}
\label{thm:main}
The Kapranov morphism $f: \M_{0,n}  \to \LM_{n-2}$ induces a $\Pic$-graded isomorphism
\begin{equation*}
f^*: \cox(\LM_{n-2})  \stackrel{\cong}{\longrightarrow} \bigoplus_{(m_h, m_J) \in \ZZ^{2^{n-2} - n +1}} H^0(\M_{0,n}, m_h H + \sum_{J \subsetneq \{1, \ldots, n-2\}} m_J E_J),
\end{equation*}
defined in terms of boundary variables by $x_{\Delta'_{J \cup \{n\}}} \mapsto x_{\Delta_{J \cup \{n\}}}$.
\end{thm}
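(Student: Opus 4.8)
The plan is to exploit the fact that the Kapranov morphism $f$ is a birational morphism of smooth projective varieties: through Kapranov's realization of $\M_{0,n}$ as an iterated blow-up of $\PP^{n-3}$ and of $\LM_{n-2}$ as the blow-up of $\PP^{n-3}$ along a subcollection of the same centres, $f$ is a composition of blow-ups along smooth centres, and hence a blow-down. From this single structural fact the statement should follow essentially formally, with the combinatorics entering only in the identification of the Picard lattice and of the boundary variables of Section~\ref{secCox:M0nLnm2}.

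First I would establish that $f_* \cO_{\M_{0,n}} = \cO_{\LM_{n-2}}$, which is the standard fact that a birational morphism with connected fibres from a smooth variety to a normal one satisfies $f_*\cO = \cO$ (and is immediate for a composition of blow-ups along smooth centres). Combined with the projection formula this yields, for every line bundle $\cO(D)$ on $\LM_{n-2}$,
\begin{equation*}
f_* f^* \cO(D) \;=\; \cO(D) \otimes f_* \cO_{\M_{0,n}} \;=\; \cO(D),
\end{equation*}
and hence a natural isomorphism $f^* \colon H^0(\LM_{n-2}, \cO(D)) \xrightarrow{\cong} H^0(\M_{0,n}, f^* \cO(D))$ on each graded piece: injectivity is automatic since $f$ is dominant, and surjectivity is exactly the content of $f_*\cO = \cO$.

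Next I would analyse $f^*$ on Picard groups. Because $f$ is birational, $f^* \colon \Pic(\LM_{n-2}) \to \Pic(\M_{0,n})$ is injective, identifying $\Pic(\LM_{n-2})$ with a sublattice of rank $2^{n-2}-n+1$. Using the toric description of $\LM_{n-2}$ together with the Kapranov basis, I would check that this sublattice is precisely the span of $[H]$ and of the classes $[E_J]$ appearing in the statement: $H$ is the common pullback of the hyperplane class of $\PP^{n-3}$, and each torus-invariant divisor $\Delta'_{J\cup\{n\}}$ pulls back to the boundary divisor $\Delta_{J\cup\{n\}}$. Granting the graded-piece isomorphisms above, reindexing $\cox(\LM_{n-2}) = \bigoplus_{D} H^0(\LM_{n-2},\cO(D))$ by $f^* D = m_h H + \sum_J m_J E_J$ reproduces exactly the right-hand side of the asserted isomorphism. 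Multiplicativity is free, since $f^*$ is compatible with tensor products of line bundles and with multiplication of sections; the assembled map is therefore a $\Pic$-graded ring homomorphism, and being bijective in each degree it is a ring isomorphism onto the stated subring of $\cox(\M_{0,n})$. Tracing a torus-invariant section through these identifications then gives the explicit rule $x_{\Delta'_{J\cup\{n\}}} \mapsto x_{\Delta_{J\cup\{n\}}}$.

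I expect the main obstacle to lie not in the cohomological formalism but in the bookkeeping of the Picard lattice and the boundary variables. One must pin down which torus-invariant divisors of $\LM_{n-2}$ correspond to which boundary divisors of $\M_{0,n}$ under the identification of Section~\ref{secCox:M0nLnm2}, and—most delicately—verify that the pullback of each toric boundary section is the \emph{single} variable $x_{\Delta_{J\cup\{n\}}}$, with no spurious exceptional factors; that is, that the strict transform of $\Delta'_{J\cup\{n\}}$ coincides with its total transform, so that $\Delta'_{J\cup\{n\}}$ meets none of the centres contracted by $f$ with positive multiplicity. One must also confirm that the resulting classes span precisely the claimed sublattice. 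Since the paper's viewpoint is combinatorial, an alternative I would keep in mind is to bypass the projection formula and argue directly on the monomial bases of the toric spaces $H^0(\LM_{n-2}, \cO(D))$, matching them with sections on $\M_{0,n}$; this makes the boundary-variable correspondence manifest but shifts the burden onto the same combinatorial identification.
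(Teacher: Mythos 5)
Your overall architecture coincides with the paper's: the cohomological half (birational morphism of smooth varieties, $f_*\cO_{\M_{0,n}} = \cO_{\LM_{n-2}}$, projection formula, isomorphism on each graded piece with injectivity from dominance) is exactly Lemma \ref{lemmaCox:pbH0}, and the remaining work is indeed the bookkeeping of the Picard lattice and boundary variables. However, the step you defer to "verification" --- that the strict transform of each $\Delta'_{J\cup\{n\}}$ coincides with its total transform, so that the pullback of a toric boundary section is the single variable $x_{\Delta_{J\cup\{n\}}}$ with no spurious exceptional factors --- is precisely the key technical result of the paper (Proposition \ref{prop:M0nPBvsPT}), and it is not formal. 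For $n \geq 6$ the centres contracted by $f$ (the proper transforms of linear subspaces through $l_{n-1}$) genuinely meet the earlier exceptional divisors $E'_J$, so one cannot argue by disjointness as in the $n=5$ case. The paper's argument requires: (a) the observation that the chosen blow-up order respects inclusion of centres, so by the clean-intersection lemma (Lemma \ref{lemma:cleanPTunderBU}) one only needs to treat nested chains $J \subsetneq J'$; (b) transversality of the proper transform of the earlier exceptional divisor $E$ with the later centre $l_{J'}$; and (c) a fibre-dimension comparison showing that the exceptional divisor of $Bl_{l_{J'}\cap E}(E)$ fills out all of $f_{J'}^{-1}(l_{J'}\cap E)$, whence $\widetilde{E} = f_{J'}^{-1}(E) = f_{J'}^*(E)$. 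Without some such argument your claim that $f^*[E'_J] = [E_J]$ (rather than $[E_J]$ plus contributions from later exceptional divisors) is unsupported, and both the grading statement and the rule $x_{\Delta'_{J\cup\{n\}}} \mapsto x_{\Delta_{J\cup\{n\}}}$ remain unproven.

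A secondary point you fold into "the identification of Section \ref{secCox:M0nLnm2}": the divisors $E_J$ produced by the Kapranov ordering of blow-ups must be matched with the standard boundary divisors $\Delta_{J\cup\{n\}}$, which are usually defined via the Hassett ordering. The paper handles this with Lemma \ref{lemma:disjointBU} and Proposition \ref{prop:KapHassBases} (commuting disjoint blow-ups and tracking the exceptional divisors through the resulting isomorphism). This is routine but does need to be said; your proposal assumes it implicitly.
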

Since for each $n$ there correspond $\binom{n}{2}$ different permutohedral spaces according to a choice of poles (see Section \ref{secCox:M0nLnm2}), we obtain $\binom{n}{2}$ polynomial subrings of $\cox(\M_{0,n})$. An immediate corollary is that the corresponding subrings of the ring of generators meet the ideal of relations trivially. In addition, Theorem \ref{thm:main} provides a partial solution to the Riemann-Roch problem for $\M_{0,n}$: for any divisor $D$ with a non-trivial section in the image of $f^*$ above, the dimension of global sections $h^0(\M_{0,n}, D)$ equals that of its preimage in $\LM_{n-2}$, which can be calculated by counting lattice points in a corresponding polytope.

Theorem \ref{thm:main} follows from a detailed study of Kapranov's blow-up construction of $\M_{0,n}$. The key technical result states that the pull-back and proper-transforms of the centers being blown up coincide, even for $n \geq 6$ when the blow-up centers are not all disjoint (see Proposition \ref{prop:M0nPBvsPT}).

As an application, in Section \ref{secCox:CoxFull} we exhibit degrees of $\cox(\M_{0,n})$ that always contain relations and show that, for $n = 6$, these relations are generated by Pl\"ucker relations. The proof follows similar lines to those used in \cite{MR2529093} to prove the Batyrev-Popov conjecture that the Cox rings of del Pezzo surfaces (including $\M_{0,5}$) are quadratic algebras. Theorem \ref{thm:main} provides the bridge between del Pezzo surfaces and $\M_{0,n}$. We hope that this analogy can be extended to find a presentation of the Cox ring of $\M_{0,6}$.

The remainder of the paper is organized as follows. In Section \ref{secCox:background}, we introduce the spaces $\M_{0,n}$ and $\LM_{n-2}$, present basic facts and notation from toric geometry, and then define the Cox ring of a projective variety. Section \ref{secCox:M0nLnm2} contains the proof of Theorem \ref{thm:main}, obtained by studying the blow-up constructions of $\LM_{n-2}$ and $\M_{0,n}$, with attention focused on how divisor classes in $\LM_{n-2}$ pull back to $\M_{0,n}$. Of particular use is the language of `clean intersections.' Originally defined by Bott in \cite{MR0090730} in the context of differential geometry, we use the more algebraic formulation of \cite{MR2595553}. Section \ref{secCox:CoxFull} then contains the application to relations in $\cox(\M_{0,6})$. 

\textbf{Acknowledgements:} I would first like to thank my supervisor, Gavril Farkas, for his many ideas and explanations. It is a pleasure to thank Diane Maclagan, whose input led to numerous improvements. I would also like to thank Nathan Ilten, who provided significant help in the first phases of this project, Martha Bernal, Antonio Laface, and Hendrik S\"u\ss\, for commenting on an earlier version of this paper, and Mark Blume for pointing out the connection to Losev-Manin moduli spaces.

\section{Background and notation}
\label{secCox:background}

In this section, we introduce necessary definitions and concepts from moduli spaces of curves, toric geometry, and Cox rings.
First we give a brief account of $\M_{0,n}$ and the permutohedral space $\LM_{n-2}$, referring to 
\cite{MR1034665}, \cite{MR702953}, \cite{MR2262630}, \cite{larsenThesis}, respectively \cite{MR1237834}, \cite{MR1786500}, \cite{blume1} for more details. All varieties discussed below are defined over the complex numbers.

An element of $\M_{0,n}$ is an isomorphism class of a tree of projective lines with $n$ distinct marked points distributed over the smooth locus such that each irreducible component contains at least three special (that is, marked or singular) points. Of particular interest are the \emph{boundary divisors} in $\M_{0,n}$, which are irreducible loci whose general element has a single node (equivalently, two components). If $J \subseteq \{1, \ldots, n\}$ denotes the marked points on one component of the general element, then we denote the resulting boundary divisor by $\Delta_{J, J^c}$, or $\Delta_{J}$. Intuitively, we can consider $\Delta_J$ as the locus of elements obtainable by allowing the special points of the general element to vary. When two special points collide, the result is a new rational component glued at the `collision' point and containing the two (distinct) colliding points.

Permutohedral spaces were first studied in the context of $\M_{0,n}$ in \cite{MR1237834}, where Kapranov realized $\M_{0,n}$ as an iterated blow-up of $\PP^{n-3}$, with the permutohedral space $\LM_{n-2}$ functioning as the final toric variety occurring in his ordering of the blow-ups (see Section  \ref{secCox:M0nLnm2}). The name for these varieties derives from their defining polytopes, which can be realized as the convex hull of the orbit of the point $(1,2, \ldots, n-2) \in \RR^{n-2}$ under the action of the symmetric group on $n-2$ letters, i.e. as a permutohedron. It was realized in \cite{MR1786500} that these spaces admit a modular interpretation, which we now describe.

An element of the permutohedral space $\LM_{n-2}$ is an isomorphism class of a chain of projective lines with two \emph{poles}, one at each of the end components, and $n-2$ marked points distributed along the smooth locus of the chain. These marked points need not be distinct from one another, but they cannot coincide with the poles. Again, each irreducible component is required to contain at least three special points (marked, singular or pole), or, equivalently, each component must contain at least one marked point.


A key feature of permutohedral spaces is that they are toric varieties described as a composition of blow-ups of projective space along torus-invariant linear centers. It will be useful in the proof of Theorem \ref{thm:main} to describe these blow-ups explicitly in terms of fans and cones, so we now turn attention to toric blow-ups. 

Our sources for toric geometry are \cite{cls} and \cite{MR1234037}. Let $N$ be a free abelian group of rank $d$ with dual lattice $M$, and let $X_\Sigma$ be a toric variety of dimension $d$ with fan $\S \subseteq N \otimes_\ZZ \RR = N_\RR$ and torus $T_N$. We denote the set of $k$-dimensional cones of $\S$ by $\S(k)$. A central fact for what follows is the \emph{Orbit-Cone correspondence} (see e.g. Theorem 3.2.6 of \cite{cls}): there exists a bijective correspondence between cones $\s \in \S$ and $T_N$-orbits $O(\sigma)$ such that if $\s \in \S(k)$, then $\dim(O(\s)) = d-k$. 
\noindent Of particular importance are the closures of these orbits. For $\s \in \S(k)$, let $N_\s$ be the sublattice of $N$ generated by points of $\s \cap N$. Then for $\s \in \S(k)$, the orbit closure $V(\s) = \overline{O(\s)}$ is the toric variety $X_{\Star(\s)}$, where
$\Star(\s) = \{ \overline{\tau}: \s \textrm{ is a face of } \tau\}$,
and $\overline{\tau}$ is the image of $\tau$ under the projection map $N_\RR \to (N  / N_\s)_\RR$ (see Proposition 3.2.7 of \cite{cls}).
\begin{defn}
\label{defnCox:strataToric}
The \emph{codimension-k strata} of the toric variety $X_\S$ are the subvarieties $V(\s)$, where $\s \in \S(k)$. The codimension-one strata of $X_\S$ are called \emph{boundary divisors}, and will be denoted $D_{\rho} = V(\rho)$, where $\rho \in \S(1)$.
\end{defn}
The reason for labeling these divisors as `boundary' is that same as for $\M_{0,n}$: for $\M_{0,n}$, the union of the boundary divisors equals the complement $\M_{0,n} \setminus M_{0,n}$, while in the toric case, the union of the boundary divisors is $X_\S \setminus T_N$. Note further that the strata of the toric variety $X_\S$ share two of the nice properties of the strata of $\M_{0,n}$: the union of the codimension-one strata is a normal crossing divisor, and each codimension-$k$ stratum is a complete intersection of $k$ boundary divisors (see \cite{MR1234037}, Section 5.1).

We next describe the toric interpretation of a blow-up along a torus-invariant center (see Proposition 3.3.15 of \cite{cls}). Let $\S \subseteq N$ be a $d$-dimensional fan, and let $\sigma = \langle u_1, \ldots, u_d \rangle_{\geq 0}$ be a smooth cone (that is, $u_1, \ldots, u_d$ form a $\ZZ$-basis for the lattice $N$). To construct the blow-up of $X_\S$ along $V(\s)$, let $u = u_1 + \ldots + u_d$, and define $\s'$ to be the set of all cones generated by subsets of $\{u, u_1, \ldots, u_d \}$ that do not contain $\{u_1, \ldots, u_d\}$. Then the fan of the blow-up $Bl_{V(\s)}(X_\S)$ is 
\begin{equation}
\S' = (\S \setminus \s) \cup \s'. \nonumber
\end{equation}
The exceptional divisor of the blow-up is $D_u$ (taken in $X_{\S'}$), and the proper transform of the divisor $D_{u_i}$ in $X_{\S}$, $i=1, \ldots, d$, is $D_{u_i} - D_u$ in $X_{\S'}$ (for $\rho$ not a face of $\s$, the proper transform leaves $D_\rho$ unchanged). 

The Kapranov blow-up constructions described in the next section involve iterated blow-ups along linear subspaces of projective space. To harmonize notation for the linear subspaces and the resulting exceptional divisors (and their proper transforms), we use the following conventions.

\begin{notation}[Linear subspaces, blow-ups, and proper transforms]
\label{notation:pt}
For nonempty $J \subseteq \{1, \ldots, d+1\}$, let $l_J \subseteq \PP^d$ be the coordinate subspace
\begin{equation}
l_J = \{[x_1, \ldots, x_{d+1}] \in \PP^{d}: x_i = 0 \textrm{ if }i \in J^c\}. \nonumber
\end{equation}
We will label blow-ups along a coordinate subspace by the index of the center being blown-up. For example, $f_{J}: X_J \to \PP^d$ will denote the blow-up of $\PP^d$ along $l_J$. For proper transforms of linear subspaces under iterated blow-ups, we will in general abuse notation by not demarcating the proper transform, but rather indicating which proper transform is intended via the ambient variety. For example, we will write $l_{J} \subseteq X_{J'}$ for the proper transform of $l_J$ under the blow-up $f_{J'}$ (and all blow-ups preceding $f_{J'}$). An exception to this convention will be made when the focus is on how a subvariety behaves under proper transform (as, for example, in the proof of Proposition \ref{prop:M0nPBvsPT}). In such cases, we denote the proper transform of a subvariety $V \subseteq X$ under a blow-up $f_J: X_J \to X$ by $\widetilde{V}$.
\end{notation}
\noindent The benefit of the above labeling scheme is that the exceptional divisor in the Kapranov construction from the blow-up along $l_J$ corresponds to the usual labeling $E_J$ (see Definition \ref{def:KapBasisM}).

\begin{example}
\label{ex:Pd}
In our notation, the ray $\rho_{12}$ of the fan of $\PP^2$ in Figure \ref{fig:raysL3} (a) corresponds to the line $V(\rho_{12}) = \{[x_1, x_2, 0] \in \PP^2\}$, while the exceptional divisor obtained from blowing up $l_1 = [1,0,0]$ is $E_1 = V(\rho_1)$ of Figure \ref{fig:raysL3} (b).
\end{example}
\begin{figure}
	\begin{center}
                \subfigure[]{\fanptwo}
                \subfigure[]{\fanlthree}
                    \end{center}
\caption{Rays of fans of $\PP^2$ and $\LM_3$}
\label{fig:raysL3}	
\end{figure}
 Cox rings were first defined for toric varieties in \cite{MR1299003} (see also Chapter 5 of \cite{cls}). The Cox (or total coordinate) ring of the toric variety $X_\S$ is the polynomial ring
\begin{equation}
\cox(X_\S) = \CC[x_\rho: \rho \in \S(1)]. \nonumber
\end{equation}
This ring has a $\Pic(X_\S)$-grading defined by 
\begin{equation}
\deg( \prod_{\rho \in \S(1)} x_{\rho}^{a_\rho}) = [ \sum_{\rho \in \S(1)} a_\rho D_{\rho}]. \nonumber
\end{equation}

For $\alpha \in \Pic(X_\S)$, we label the $\alpha$-graded part of the Cox ring by $\cox(X_\S)_\alpha$. If a divisor $D = \sum a_\rho D_\rho$ has class $\alpha$, there exists a non-canonical isomorphism $H^0(X_\S, \cO_{X_\S}(D)) \to \cox(X_\S)_\alpha$ (see \cite{cls}, Sections 4.3 and 5.4, for more details and proofs).

The absence of a canonical identification between the $\alpha$-graded part of $\cox(X_\S)$ and global sections of a divisor whose class is $\alpha$ can be remedied by selecting divisors $D_1, \ldots, D_r$ whose classes form a basis for $\Pic(X_\S)$. With this choice, multiplication of sections is induced by multiplication of functions in $\CC(X_\S)$, bringing us to the more general definition of the Cox ring of a projective variety from \cite{MR1786494}.
\begin{defn}
\label{def:coxProj}
Let $X$ be a projective variety with a torsion-free Picard group satisfying $\Pic(X)_\QQ = N^1(X)_\QQ$. Let $D_1, \ldots, D_r$ be divisors whose classes form a basis of $\Pic(X)_\QQ$. The \emph{Cox ring} of $X$ with respect to this choice of divisors is
\begin{equation}
\cox(X) = \sum_{(m_1, \ldots, m_r) \in \ZZ^r} H^0(X, \cO_X(m_1 D_1 + \ldots + m_r D_r)) \nonumber,
\end{equation}
with multiplication given by multiplication of functions in $\CC(X)$, and the grading defined by the Picard group, as above.
\end{defn}
\noindent It is proved in \cite{MR1786494} that different choices of divisors yield non-canonically isomorphic Cox rings.

Before turning to the proof of Theorem \ref{thm:main}, we make two remarks about notation. For a divisor $D$ on a variety $X$ satisfying $\Pic(X)_\QQ = N^1(X)_\QQ$, we will use without further comment both the notation $[D]$ for the numerical equivalence class of $D$ in $N^1(X)_{\QQ}$ and $\cO_X(D)$ for the corresponding element of $\Pic(X)$.

Lastly, when referring to a boundary divisor $\D_J$ (or corresponding section variable $x_J$ in the Cox ring), we take for granted that $J \subseteq \{1, \ldots, n\}$ with $2 \leq |J| \leq n-2$.

\section{Permutohedral subrings in the Cox ring of $\M_{0,n}$}
\label{secCox:M0nLnm2}
To prove Theorem \ref{thm:main} and establish the polynomial permutohedral subrings of $\cox(\M_{0,n})$, we study the Kapranov's blow-up constructions of $\M_{0,n}$ and $\LM_{n-2}$, with special emphasis on the order of the blow-ups. To construct $\LM_{n-2}$ as a blow-up of $\PP^{n-3}$, we make use of the notational convention \ref{notation:pt}. We first blow up $l_1 = [1, 0, \ldots, 0]$, then the proper transform $l_2 \in X_1$, where $f_1 : X_1 \to \PP^{n-3}$ is the blow-up along $l_1$, continuing until we have blown-up $l_{n-2} \in X_{n-3}$. Next we blow up the proper transform of the line $l_{12} \subseteq X_{n-2}$, then the line $l_{13} \subseteq X_{12}$, continuing until all proper transforms of lines are blown up. We proceed in this way, blowing up proper transforms of coordinate subspaces in increasing order of dimension until all proper transforms of codimension two coordinate subspaces have been blown up. Note that this ordering respects the partial ordering by inclusion on the linear subspaces whose proper transforms are blown up. In other words, if we list the blow-up centers in the order they are blown up as $J_i$, then $l_{J_i} \subsetneq l_{J_j}$ only if $i < j$. This construction gives an explicit basis for $\Pic(\LM_{n-2})$. 
\begin{defn}
\label{def:KapBasisLM}
Let $t' : \LM_{n-2} \to \PP^{n-3}$ be the composition of blow-ups in the preceding paragraph. The \emph{Kapranov basis} of $\LM_{n-2}$ consists of the classes of the following divisors in $\LM_{n-2}$:
\begin{itemize}
\item the pull-back of a generic hyperplane in $\PP^{n-3}$, denoted $H'$, and
\item the (proper transforms of) exceptional divisors obtained by blowing up (the proper transforms of) $l_J$ for $J \subseteq \{1, \ldots, n-2\}$ and $1 \leq |J| \leq n-4$. We denote these divisors by $E'_{J}$.  
\end{itemize}
\end{defn}
\noindent The dash on these morphisms and classes is to distinguish these classes from their analogues for $\M_{0,n}$ to be discussed shortly. Kapranov's basis for $\LM_{n-2}$ implies, in particular, that the Picard number of $\LM_{n-2}$ is $2^{n-2} - n +1$.

The fan of $\LM_{n-2}$ is determined by the various star subdivisions of the fan for $\PP^{n-3}$ as described in Section \ref{secCox:background}. For $1 \leq |J| \leq n-4$, the ray $\rho_J$ determines (the proper transform of) the exceptional divisor arising from blowing up (the proper transform of) the coordinate subspace $l_J$. For $|J| = n-3$, the divisor associated to $\rho_J$ is the proper transform of the hyperplane $l_J$.

\begin{example}
\label{ex:blowupL3}The permutohedral space $\LM_3$ is the blow-up of $\PP^2$ in points $l_1 = [1,0,0], l_2 = [0,1,0]$, and $l_3 = [0,0,1]$. The rays of the fan $\S(\LM_3)$ are depicted in Figure \ref{fig:raysL3} (b). The fan structure is obvious, but it will be helpful for higher $n$ to note that a set of rays generates a cone of $\S(\LM_3)$ if and only if the indices of the rays are totally ordered under inclusion. For example, the rays $\{\rho_1, \rho_{12}\}$ generate a two-dimensional cone, but $\{\rho_1, \rho_2\}$ does not determine a cone.

By the discussion about proper transforms under toric blow-ups above,
\begin{align*}
[D_{23}] &=  [H'] - [E'_2] - [E'_3],\\
[D_{13}] &=  [H'] - [E'_1] - [E'_3], \\
[D_{12}] &=  [H'] - [E'_1] - [E'_2], \\
D_{1} = E'_1&, \,
D_{2}  = E'_2,  \textrm{ and }
D_{3}  = E'_3, 
\end{align*} 
where $E'_i$ denotes the divisor $V(\rho_i)$.
\end{example}


Kapranov constructed $\M_{0,n}$ from $\LM_{n-2}$ in \cite{MR1237834} by further blow-ups along non-torus-invariant linear subspaces. In addition to linear spans of the points $l_1, \ldots, l_{n-2}$, we take one more point in general position. For concreteness, we set $l_{n-1} = [1, \ldots, 1]$.

We first blow up the proper transform of the point $[1,\ldots,1]$, that is $l_{n-1} \in X_{3 \, \ldots \, n-2}$, and then blow up the proper transforms of all remaining linear centers containing $l_{n-1}$ in two stages: in the first round of blow-ups, which we call stage-$l_{n-2}^c$, we blow up proper transforms of linear centers containing $l_{n-1}$ but not $l_{n-2}$ in order of increasing dimension, as above, while in the second, labeled stage-$l_{n-2}$, we blow-up the remaining proper transforms of linear centers containing both $l_{n-2}$ and $l_{n-1}$, again, in order of increasing dimension. 

Note that this ordering of the blow-ups still respects the partial ordering by inclusion.
\begin{defn}
\label{def:KapBasisM}
Let $f: \M_{0,n} \to \LM_{n-2}$ be the composition of blow-ups involving $l_{n-1}$ above, and set $t = t' \circ f: \M_{0,n} \to \PP^{n-3}$. 

The \emph{Kapranov basis} of $\M_{0,n}$ consists of the classes of the following divisors in $\M_{0,n}$:
\begin{itemize}
\item the pullback under $t$ of a generic hyperplane in $\PP^{n-3}$, denoted by $H$;
\item the proper transforms of $E'_J$, where $J \subseteq \{1, \ldots, n-2\}$, $1 \leq |J| \leq n-4$; and
\item the (proper transforms of the) exceptional divisors obtained by blowing up the proper transforms of $l_J$, where $n-1 \in J$ and $1 \leq |J| \leq n-4$.
\end{itemize}
We denote divisors of the last two types by $E_J$.
\end{defn}
 
We next establish how the members of the Kapranov basis for $\M_{0,n}$ relate to the pull-backs of classes in the Kapranov basis for $\LM_{n-2}$ under the composition of blow-ups $f: \M_{0,n} \to \LM_{n-2}$. Before turning to the general case, we look at the simplest non-trivial example of $\M_{0,5}$ and $\LM_{3}$.
\begin{example}
\label{ex:M05pbpt} 
To obtain $\M_{0,5}$ from $\LM_{3}$, we further blow-up $p_4 = [1,1,1]$. Since $E_4$ is disjoint from $f^*(E'_i)$, $i=1, \ldots, 3$, it follows that the proper transforms of the $E'_i$ equal their pull-backs under $f: \M_{0,5} \to \LM_3$ for $i=1, 2,3$, that is,
\begin{equation}
E_i = f^*(E'_i), \quad i=1, \ldots, 3 \nonumber.
\end{equation}
\end{example}

For $n \geq 6$, the exceptional divisors from the stages $l_{n-2}^c$ and $l_{n-2}$ described before Definition \ref{def:KapBasisM} are not disjoint from pull-backs of exceptional divisors of the preceding stages. Nevertheless, we show that the same relationship between pull-backs and proper transforms of exceptional divisors holds.
\begin{prop}
\label{prop:M0nPBvsPT}
For every $J \subseteq \{1, \ldots, n-2\}$, $1 \leq |J| \leq n-4$,
\begin{equation}
f^*(E_J')  = \widetilde{E_J'} = E_J. \nonumber
\end{equation}
\end{prop}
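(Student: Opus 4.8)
The plan is to reduce the proposition to the single equality $f^*(E_J') = \widetilde{E_J'}$, since the second equality $\widetilde{E_J'} = E_J$ holds by Definition \ref{def:KapBasisM}, in which $E_J$ is defined precisely as the proper transform of $E_J'$. To prove the first equality I would work stage by stage through the blow-ups composing $f$. For a single blow-up $\pi : \widetilde{X} \to X$ of a smooth variety along a smooth center $Z$ with exceptional divisor $E$, a divisor $D$ satisfies
\begin{equation*}
\pi^*D = \widetilde{D} + \mathrm{mult}_Z(D)\, E,
\end{equation*}
generalizing the toric formula of Section \ref{secCox:background}, so that $\pi^* D = \widetilde{D}$ as soon as $Z \not\subseteq D$. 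Applying this at each blow-up making up $f$, it therefore suffices to show that no center blown up in $f$ is contained in the current proper transform of $E_J'$.

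The centers blown up by $f$ are exactly the proper transforms of the linear subspaces $l_{J'}$ with $n-1 \in J'$ (including the point $l_{n-1} = [1,\ldots,1]$ itself). The key step is an image computation under $t : \M_{0,n} \to \PP^{n-3}$. Since $E_J'$ is the exceptional divisor over $l_J$, every proper transform of $E_J'$ maps into $l_J$ under the composite $t$, whereas the proper transform of $l_{J'}$ dominates $l_{J'}$. Because $n-1 \in J'$ we have $l_{n-1} \in l_{J'}$, while $l_{n-1} \notin l_J$ (the all-ones point lies on no coordinate subspace $l_J$ with $J \subsetneq \{1, \ldots, n-2\}$). Hence $l_{J'} \not\subseteq l_J$, so a general point of $\widetilde{l_{J'}}$ maps outside $l_J$ and thus cannot lie on $\widetilde{E_J'}$. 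This gives $\widetilde{l_{J'}} \not\subseteq \widetilde{E_J'}$ at every stage, i.e. $\mathrm{mult}_{\widetilde{l_{J'}}}(\widetilde{E_J'}) = 0$, and composing over all the blow-ups of $f$ yields $f^*(E_J') = \widetilde{E_J'}$.

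The subtle point, and the reason the statement genuinely requires proof for $n \geq 6$, is that one cannot argue by disjointness: as noted before the proposition, the centers $\widetilde{l_{J'}}$ really do meet the proper transforms $\widetilde{E_J'}$ once $n \geq 6$. The correct condition is the weaker ``not contained in,'' which the image argument supplies no matter how the strata intersect. The remaining technical input is that every intermediate center is smooth, so that the blow-up multiplicity formula applies; I expect this to be the main thing to pin down carefully, and it is exactly what the clean-intersection structure of the arrangement of the $l_{J'}$ underlying Kapranov's construction provides, keeping each proper transform smooth throughout. Collecting the stage-by-stage equalities then completes the proof.
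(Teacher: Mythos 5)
Your proof is correct, but it takes a genuinely different route from the paper's. The paper first reduces to chains $J_1 \subseteq J_2 \subseteq \cdots \subseteq J_k$ (incomparable pairs are disposed of by Lemma \ref{lemma:cleanPTunderBU}, which makes the relevant proper transforms disjoint once $l_{J\cap J'}$ has been blown up), and then proves the set-theoretic identity $f_{J_k}^{-1}(E)=\widetilde{E}$ by hand: it realizes $\widetilde{E}$ as the blow-up of $E$ along $l_{J_k}\cap E$ via Proposition \ref{prop:ehBlowUp} and identifies the exceptional divisor of that blow-up with $f_{J_k}^{-1}(l_{J_k}\cap E)$ by a dimension count of projective-bundle fibers, using the transversality statements of Lemma \ref{lemma:cleanPTunderBU}. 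You instead use the divisorial pullback formula $\pi^*D=\widetilde{D}+\mathrm{mult}_Z(D)\,E$ and reduce the whole proposition to the single assertion that no center blown up by $f$ is contained in the running proper transform of $E_J'$, which you obtain from an image computation: each center dominates a span $l_{J'}$ through $l_{n-1}=[1,\ldots,1]$, while every proper transform of $E_J'$ maps into the coordinate subspace $l_J$, which misses that point. This bypasses both the chain reduction and the fiber-dimension count, and it correctly identifies ``not contained in'' (rather than disjointness) as the operative condition; note also that your divisor-level equality is equivalent to the paper's set-theoretic one, since the support of $f^*E_J'$ is $f^{-1}(E_J')$, and it is the divisor-level statement that Corollary \ref{cor:pbFormula} and Theorem \ref{thm:main} actually require. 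What you must still supply, as you acknowledge, is that every intermediate center is smooth and irreducible, so that $\mathrm{mult}_Z$ is read off at a single generic point and each blow-up contributes one irreducible exceptional divisor; this follows inductively from Proposition \ref{prop:ehBlowUp} and the clean intersections of the linear arrangement, which is the same input the paper feeds into Lemma \ref{lemma:cleanPTunderBU}. With that in place your argument is complete.
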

\noindent Note that the second equality is definitional.
For the proof we use a general characterization of proper transforms from \cite{MR1644323}, B.6:
\begin{prop}
\label{prop:ehBlowUp}
Let $Z$ be a smooth subvariety of a variety $Y$, and let $f_Z: Bl_Z(Y) \to Y$ be the blow-up of $Y$ along $Z$. If $V$ is a smooth subvariety of $Y$ containing $Z$, then the proper transform $\widetilde{V}$ is the blow-up of $V$ along $Z$, that is, $\widetilde{V} = Bl_{V \cap Z}V \to V$.
\end{prop}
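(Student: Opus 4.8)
The statement is local on $Y$, and away from $Z$ there is nothing to prove since $f_Z$ restricts to an isomorphism there; so I would work in a neighborhood of a point of $Z$. Note first that $V \cap Z = Z$ because $Z \subseteq V$, so the claim reads $\widetilde{V} = Bl_Z V$. The plan is two-step: construct a canonical morphism $Bl_Z V \to Bl_Z Y$ over $Y$ from the universal property of blowing up, and then identify it with the inclusion of the strict transform by a local computation, which is legitimate since every $Y$ occurring in the application to $\M_{0,n}$ is a smooth iterated blow-up of projective space.

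For the first step, let $\mathcal{I} \subseteq \mathcal{O}_Y$ be the ideal sheaf of $Z$, so that $\mathcal{I}\cdot\mathcal{O}_V$ is the ideal sheaf of $Z$ in $V$. Along the composite $Bl_Z V \to V \hookrightarrow Y$ the pullback of $\mathcal{I}$ becomes invertible --- it is the exceptional divisor of $Bl_Z V$ --- so the universal property of $f_Z : Bl_Z Y \to Y$ yields a unique $Y$-morphism $h : Bl_Z V \to Bl_Z Y$. Over $Y \setminus Z$ this morphism is the inclusion $V \setminus Z \hookrightarrow Y \setminus Z$, and since $h$ is proper its image is closed, irreducible, and of dimension $\dim V$; it therefore equals the closure of $V \setminus Z$ in $Bl_Z Y$, namely the strict transform $\widetilde{V}$. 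It remains to show that $h$ is a closed immersion inducing an isomorphism onto $\widetilde{V}$.

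For the second step I would use smoothness to choose adapted coordinates. With $N = \dim Y$, $a = \dim V$, $b = \dim Z$ (so $b \le a \le N$), smoothness of $V$ and $Z$ together with $Z \subseteq V$ lets me pick, near a point of $Z$, a regular system of parameters $x_1, \ldots, x_N$ for which $\mathcal{I} = (x_{b+1}, \ldots, x_N)$ and the ideal of $V$ is $(x_{a+1}, \ldots, x_N)$; in particular the generators of the ideal of $V$ lie in $\mathcal{I}$. Blowing up $Z$ is then covered by the standard charts $U_i$ ($b < i \le N$), on which the exceptional divisor is $\{x_i = 0\}$ and $x_j = x_i y_j$. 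In a chart with $b < i \le a$ the total transform of $V$ is $x_i\cdot(y_{a+1}, \ldots, y_N)$, whose residual ideal after removing the exceptional factor $x_i$ is $(y_{a+1}, \ldots, y_N)$; the subscheme this cuts out in $U_i$ is the smooth locus with coordinates $x_1, \ldots, x_b, x_i$ and the $y_j$ for $b < j \le a$, which is exactly the $i$-th chart of $Bl_Z V$. In a chart with $i > a$ one generator of the ideal of $V$ is $x_i$ itself, so the total transform is the whole exceptional divisor and $\widetilde{V}$ does not meet $U_i$. Thus $h$ is a chartwise isomorphism onto $\widetilde{V}$, and being a single morphism it glues to the desired global isomorphism.

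I expect the main obstacle to be exactly this residual-ideal computation: one must check that the equation of the exceptional divisor divides the total transform of the ideal of $V$ to first order only, so that the strict transform is $Bl_Z V$ itself and not a nonreduced thickening of it. This first-order factorization is what forces the simultaneous use of smoothness of both $V$ and $Z$ and of the containment $Z \subseteq V$; for a general, possibly singular, $Y$ the same conclusion still holds, but the coordinate computation must be replaced by the observation that the Rees algebra $\bigoplus_{d \ge 0}(\mathcal{I}\cdot\mathcal{O}_V)^d$ is the image of $\bigoplus_{d \ge 0}\mathcal{I}^d$ under restriction to $V$, so that taking $\mathrm{Proj}$ realizes $Bl_Z V$ directly as the closed subscheme $\widetilde{V}$ of $Bl_Z Y$.
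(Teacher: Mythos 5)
The paper offers no proof of this proposition to compare against: it is quoted as a known result from \cite{MR1644323}, Appendix B.6 (it is also Hartshorne II.7.15), so your proposal is a blind reconstruction of a cited fact rather than an alternative to an in-paper argument. Your reconstruction is correct, and it is essentially the standard proof: the universal property of blowing up gives the canonical morphism $h: Bl_Z V \to Bl_Z Y$ (since $\mathcal{I}\cdot\mathcal{O}_{Bl_Z V}$ is the invertible exceptional ideal), properness plus the isomorphism over $Y \setminus Z$ identifies the image with the closure of $V \setminus Z$, and the chart computation shows $h$ is an isomorphism onto that closure; the ``first-order only'' worry you flag is exactly the right point, and it is settled in your chart because $(y_{a+1}, \ldots, y_N)$ is prime and does not contain $x_i$, so the once-divided ideal is already the full saturation of the total transform. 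Two remarks on scope. First, the proposition as stated allows $Y$ itself to be singular (only $V$ and $Z$ are assumed smooth), and your coordinate argument, which chooses a regular system of parameters on $Y$ adapted to the flag $Z \subseteq V \subseteq Y$, only covers the smooth case; but your closing Rees-algebra observation --- that $\bigoplus_{d \geq 0} \mathcal{I}^d \to \bigoplus_{d \geq 0} (\mathcal{I}\cdot\mathcal{O}_V)^d$ is surjective, so that $\mathrm{Proj}$ yields a closed immersion $Bl_Z V \hookrightarrow Bl_Z Y$ with image the strict transform --- is precisely the fully general proof and supersedes the chart computation entirely, using no smoothness at all. Second, you implicitly need $V \not\subseteq Z$ so that $V \setminus Z$ is dense in $V$ and in $Bl_Z V$; otherwise both sides are empty, a harmless degenerate case. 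For the paper's purposes either version suffices, since, as you note, every ambient variety arising in the Kapranov construction is a smooth iterated blow-up of $\PP^{n-3}$.
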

A second ingredient is the notion of \emph{clean} intersections, as formulated in \cite{MR2595553}. Let $X$ be a nonsingular variety, and let $A$ and $B$ be nonsingular subvarieties. Denote by $T_A$ the total space of the tangent bundle of $A$, here considered as a subbundle of $T_X$. For $a \in A$ we denote by $T_{A,a}$ the tangent space of $A$ at the point $a$, taken as a subspace of $T_{X,a}$. 
\begin{defn}
\label{define:clean}
The subvarieties $A$ and $B$ are said to intersect \emph{cleanly} if
\begin{enumerate}
\item the set-theoretic intersection $A \cap B$ is a nonsingular subvariety of $X$, and
\item $T_{A\cap B,y} = T_{A, y} \cap T_{B, y}$ for all $y \in A \cap B$.
\end{enumerate}
\end{defn}
For example, two lines $l$ and $l'$ in $\PP^3$ will always intersect cleanly, even though their intersection is never transverse: if $l$ and $l'$ are skew, then they satisfy the definition of clean intersection trivially, while if $l$ and $l'$ meet at a point $x$, the intersection $T_{l, x} \cap T_{l', x}$ is the trivial vector space, which is the tangent space to the subvariety $x$, and finally, if $l = l'$, the criteria for clean intersection are clearly satisfied. More generally, if $l_J$ and $l_{J'}$ are linear subspaces of $\PP^m$, their intersection is also clean. 

Clean intersections behave nicely under blow-ups. The following is from Lemma 2.9 in \cite{MR2595553} (part \ref{item:Li2.9ii} is, however, a standard result: see \cite{MR1644323}, Appendix B.6). Note that we will now denote the proper transforms by a $\sim$.
\begin{lemma}
\label{lemma:cleanPTunderBU}
For $A$, $B$, $C$, and $F$ nonsingular subvarieties of $X$, a nonsingular variety, let $f_F: Bl_F(X) \to X$ be the blow-up of $X$ along $F$ with exceptional divisor $E$.
\begin{enumerate}
\item 
\label{item:Li2.9ii}
If $A$ and $B$ intersect cleanly, with $A \nsubseteq B$, $B \nsubseteq A$ such that $F = A \cap B$, then $\widetilde{A} \cap \widetilde{B} = \emptyset$.
\item 
\label{item:Li2.9i}
If $A \supsetneq F$, then $\widetilde{A}$ and $E$ intersect transversally.
\item
\label{item:Li2.9v}
If $F \subseteq A$ with both $A$ and $F$ intersecting $B$ transversally, then $\widetilde{A}$ and $\widetilde{B}$ intersect transversally.
\end{enumerate}
\end{lemma}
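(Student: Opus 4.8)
The plan is to exploit that $f_F$ is an isomorphism over $X \setminus F$, so each assertion need only be verified at points of the exceptional divisor $E = \PP(N_{F/X})$, where it becomes either a fibrewise computation in the projectivized normal bundle or a computation in standard local coordinates. Fix a point of $F$ and choose coordinates $x_1, \ldots, x_k, z_1, \ldots, z_m$ on $X$ (so $\dim X = k+m$) with $F = \{x_1 = \cdots = x_k = 0\}$; then $Bl_F(X)$ is covered by the usual charts $U_j$ ($1 \le j \le k$) with coordinates $x_j$, $s_i = x_i/x_j$ ($i \neq j$) and the $z$'s, in which $x_i = x_j s_i$ and $E = \{x_j = 0\}$. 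For part \ref{item:Li2.9ii}, observe first that any point of $\widetilde{A} \cap \widetilde{B}$ lies on $E$, since over $X \setminus F$ the transforms restrict to $A \setminus F$ and $B \setminus F$, whose intersection is $(A \cap B) \setminus F = \emptyset$. As $F = A \cap B$ is contained in both $A$ and $B$, Proposition \ref{prop:ehBlowUp} gives $\widetilde{A} \cap E = \PP(N_{F/A})$ and $\widetilde{B} \cap E = \PP(N_{F/B})$ inside $E = \PP(N_{F/X})$, so $\widetilde{A} \cap \widetilde{B}$ is their fibrewise intersection. Over $y \in F$, writing normal spaces as quotients by $T_{F,y}$, the clean condition $T_{F,y} = T_{A,y} \cap T_{B,y}$ yields $N_{F/A,y} \cap N_{F/B,y} = (T_{A,y} \cap T_{B,y})/T_{F,y} = 0$; projectivizing the zero space gives the empty set, so $\widetilde{A} \cap \widetilde{B} = \emptyset$.

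Part \ref{item:Li2.9i} is the local computation recorded in \cite{MR1644323}, B.6. Since $A \supsetneq F$ are smooth with $F \subseteq A$, one may take $A = \{x_1 = \cdots = x_a = 0\}$ with $a < k$. In the chart $U_j$ with $a < j \le k$ the ideal of $A$ pulls back to $x_j \cdot (s_1, \ldots, s_a)$, whence $\widetilde{A} = \{s_1 = \cdots = s_a = 0\}$ while $E = \{x_j = 0\}$. The functions $s_1, \ldots, s_a, x_j$ belong to a coordinate system, so their differentials are everywhere independent and $\widetilde{A}$ meets $E$ transversally; letting $j$ range over $a+1, \ldots, k$ covers all of $\widetilde{A} \cap E$.

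Part \ref{item:Li2.9v} is the substantive case. Away from $F$ the map $f_F$ is an isomorphism and the transversality of $A$ and $B$ gives the conclusion directly, so it remains to treat points over $F$; note also that along $F \cap B$ the inclusion $T_{F} \subseteq T_{A}$ shows transversality of $F$ and $B$ already implies that of $A$ and $B$. Using that $B$ meets $F$ transversally, I choose the coordinates so that simultaneously $F = \{x_1 = \cdots = x_k = 0\}$, $A = \{x_1 = \cdots = x_a = 0\}$, and $B = \{z_1 = \cdots = z_b = 0\}$ in the complementary variables; this is possible because transversality makes the conormal of $B$ independent from $dx_1, \ldots, dx_k$, so the defining equations of $B$ complete $x_1, \ldots, x_k$ to a coordinate system. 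Since $B$ is transverse to the center $F$, its proper and total transforms coincide, so in the chart $U_j$ ($a < j \le k$) one has $\widetilde{B} = \{z_1 = \cdots = z_b = 0\}$ and, as in part \ref{item:Li2.9i}, $\widetilde{A} = \{s_1 = \cdots = s_a = 0\}$. The combined functions $s_1, \ldots, s_a, z_1, \ldots, z_b$ are independent coordinates, so $\widetilde{A}$ and $\widetilde{B}$ meet transversally, and ranging over $j$ covers all points of their intersection on $E$. (If $A = F$ then $\widetilde{A} = E$ and the same coordinate computation shows $\widetilde{B}$ meets $E$ transversally, again using $B$ transverse to $F$.)

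The step I expect to demand the most care is the simultaneous coordinate normalization in part \ref{item:Li2.9v}: one must straighten the flag $F \subseteq A$ to nested coordinate subspaces while at the same time realizing $B$ as a coordinate subspace in the remaining variables, which is precisely where the transversality of $B$ with $F$ is consumed. Once these normal forms are secured, every transversality claim reduces to the observation that a selection of coordinate functions has linearly independent differentials, and the only bookkeeping left is to check, as above, that the charts $U_j$ with $a < j \le k$ exhaust the relevant part of $E$.
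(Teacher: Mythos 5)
Your argument is correct, and there is in fact nothing in the paper to compare it against: the paper does not prove this lemma but imports it from Lemma 2.9 of \cite{MR2595553} (part \ref{item:Li2.9ii} being standard, from \cite{MR1644323}, B.6), and your local-coordinate verification is essentially the argument of those sources. Part \ref{item:Li2.9ii} via $\widetilde{A}\cap E=\PP(N_{F/A})$ and $\widetilde{B}\cap E=\PP(N_{F/B})$ inside $E=\PP(N_{F/X})$, together with the fibrewise identity $N_{F/A,y}\cap N_{F/B,y}=(T_{A,y}\cap T_{B,y})/T_{F,y}=0$, is exactly the right use of cleanness. Two compressed steps deserve an explicit line. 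First, your coverage claim (``the charts $U_j$ with $a<j\le k$ exhaust the relevant part of $E$'') holds because a point of $\widetilde{A}\cap E$ is a direction in $N_{F/A}$, whose $x_j$-components vanish for $j\le a$, so $\widetilde{A}$ simply misses the charts $U_j$ with $j\le a$; without this remark one might worry about points of $\widetilde{A}\cap\widetilde{B}$ on $E$ escaping the computation. Second, the simultaneous normalization in part \ref{item:Li2.9v} is the crux, and your justification is the correct one, but it should be said that the resulting functions are \'etale (or analytic) coordinates rather than global ones: choose local equations $f_1,\dots,f_a$ for $A$, extend to $f_1,\dots,f_k$ for $F$ (possible since $F\subseteq A$ are both smooth), and pick equations $g_1,\dots,g_b$ for $B$; transversality of $F$ and $B$ means the conormal spaces meet trivially, so $df_1,\dots,df_k,dg_1,\dots,dg_b$ are independent and complete to a coordinate system, with which the chart description of the blow-up is compatible. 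Granting this, your identification of the total with the proper transform of $B$ is sound (the pulled-back ideal $(z_1,\dots,z_b)$ acquires no factor of the exceptional ideal), and every transversality claim reduces, as you say, to independence of a selection of coordinate differentials. Finally, your parenthetical on $A=F$ correctly flags a convention: the strict proper transform of the center is empty, making the claim vacuous, whereas \cite{MR2595553} uses dominant transforms, under which $\widetilde{A}=E$ and the same chart computation ($E=V(x_j)$, $\widetilde{B}=V(z_1,\dots,z_b)$) settles it; either reading is consistent with how the lemma is applied in the paper.
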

\begin{proof}[Proof of Propostion \ref{prop:M0nPBvsPT}]
Since the pull-back and proper transform of a subvariety coincide if the blow-up is along a center disjoint from the subvariety, we may restrict attention to subspaces $l_J$ and $l_{J'}$ such that $J \cap J' \neq \emptyset$. By Lemma \ref{lemma:cleanPTunderBU} \ref{item:Li2.9ii}, if $J \nsubseteq J'$ and $J' \nsubseteq J$, then once the proper transform of $l_{J \cap J'}$ is blown-up, the proper transforms of $l_J$ and $l_{J'}$ will be disjoint, as will all successive proper transforms and inverse images of $l_J$ and $l_{J'}$. Due to the partial ordering of blow-ups, we therefore need only consider how the pull-back and proper transform of the exceptional divisor of the blow-up of (the proper transform of) $l_J$ relate under the blow-up of the proper transform of $l_{J'}$ for $J \subsetneq J'$.

We first consider $J_1 \subseteq J_2 \subseteq J_3$, with $|J_3| = |J_2| + 1 = |J_1| + 2$. Let $f_{J_2}: X_{J_2} \to X_{J_1}$ be the blow-up along $l_{J_2} \subseteq X_{J_1}$ with exceptional divisor $E$. For the blow-up $f_{J_3}: X_{J_3} \to X_{J_2}$ along $l_{J_3} \subseteq X_{J_2}$, we want to show that $f_{J_3}^{*}(E) = \widetilde{E}$. The generic point of $E$ is disjoint from the center of the blow-up $l_{J_3}$, so since $f_{J_3}$ is an isomorphism over such points, it suffices to show that $f_{J_3}^{-1}(E) = \widetilde{E}$. 

By Proposition \ref{prop:ehBlowUp}, $\widetilde{E}$ is the blow-up of $E$ along $l_{J_3} \cap E$. Denote this blow-up by $\phi: \widetilde{E} \to E$, and its exceptional divisor by $F$. Since $\phi^{-1}(E \setminus l_{J_3}) = f_{J_3}^{-1}(E \setminus l_{J_3}) = f_{J_3}^{-1}(E) \setminus f_{J_3}^{-1}(l_{J_3})$, we are finished in this case if $F = f_{J_3}^{-1}(l_{J_3} \cap E)$. But $F \subseteq f_{J_3}^{-1}(l_{J_3} \cap E)$, and $F$ is a projective bundle over $l_{J_3} \cap E$, with each fiber a projective space of dimension equal to the codimension of $l_{J_3} \cap E$ in $E$. By Lemma \ref{lemma:cleanPTunderBU} \ref{item:Li2.9i}, $E$ meets $l_{J_3}$ transversely, so for $q \in l_{J_3} \cap E$, the fiber $F_q$ has dimension $n - |J_3| - 4$. On the other hand, $f_{J_{3}}^{-1}(q)$ is a fiber of the projectivized normal bundle $\PP(\cN_{X_{J_2} \setminus l_{J_3}})$. The dimension of $f_{J_3}^{-1}(q)$ is also $n - |J_3| - 4$, so we have an inclusion of projective spaces of the same dimension, giving $F_q = f_{J_3}^{-1}(q)$. It follows that $F = f_{J_3}^{-1}(l_{J_3} \cap E)$, as desired.

We now consider a chain of inclusions $J_1 \subseteq J_2 \subseteq J_3 \subseteq \ldots \subseteq J_k$, where again, $|J_j| = |J_{j-1}| + 1$. Abusing notation as usual, let $E \subseteq X_{J_{k-1}}$ be the proper transform of the exceptional divisor of $f_{J_2}: X_{J_2} \to X_{J_1}$. If $\widetilde{E}$ is now the proper transform under $f_{X_{J_k}}: X_{J_{k}} \to X_{J_{k-1}}$, the blow up along $l_{J_k} \subseteq X_{J_{k-1}}$, the proposition is proved once we show $\widetilde{E} = f_{J_k}^{-1}(E)$. Since every center blown up before $l_{J_k}$ is contained in $l_{J_k}$, Lemma \ref{lemma:cleanPTunderBU} \ref{item:Li2.9v} implies that $\widetilde{E}$ and $l_{J_k}$ intersect transversally. The rest of the proof now proceeds identically to the initial case.
\end{proof}
Since the hyperplane class in $\M_{0,n}$ is by definition the pull-back of the hyperplane class on $\LM_{n-2}$, Proposition \ref{prop:M0nPBvsPT} implies:
\begin{cor}
\label{cor:pbFormula}
The pull-back of $[D] = h [H'] + \sum_{\substack{1 \leq |J| \leq n-4, \\ J \subseteq \{1, \ldots, n-2\}}} e_J [E'_J] \in \Pic(\LM_{n-2})$ to $\Pic(\M_{0,n})$ is given by
\begin{equation}
f^*[D] = h [H] + \sum_{\substack{1 \leq |J| \leq n-4 \\ J \subseteq \{1, \ldots, n-2\}}} e_J [E_J]. \nonumber
\end{equation}
\end{cor}

Proposition \ref{prop:M0nPBvsPT} almost gives an identification between the divisors $D_J$ of $\LM_{n-2}$ and the boundary divisors $\D_J$ of $\M_{0,n}$. The minor obstacle to this identification is that the order of blow-ups used in the majority of the literature is not the one we used in defining the Kapranov basis (Definition \ref{def:KapBasisM}); instead the ordering due to Hassett is generally used (\cite{MR1957831}). Hence it is not immediately obvious (but also not difficult to prove) that the hyperplane classes and exceptional divisors arising from the different blow-up orderings are interchangeable.

For the remainder of this section only, we distinguish the varieties resulting from the two orderings of the blow-ups by $\M^k_{0,n}$ and $\M^h_{0,n}$ for the Kapranov and Hassett constructions, respectively. Likewise, we denote the resulting bases of the Picard groups by $\cB^k = \{[H^k], [E^k_J]\}$ and $\cB^h = \{[H^h], [E^h_J]\}$. We now prove that the two ordering of the blow-ups are related by an isomorphism that takes the basis $\cB^h$ to the basis $\cB^k$.

We begin with a basic observation.
\begin{lemma}
\label{lemma:disjointBU}
Let $X$ be a smooth variety with disjoint, closed subvarieties $A_1$ and $A_2$. Let
$f_1 : X_1 \to X$ be the blow-up of $X$ along $A_1$, and let $f_2: X_2 \to X$ be the blow-up of $X$ along $A_2$. We denote the proper transform of $A_2$ under $f_1$ by $\widetilde{A_2}^{f_1}$, and likewise, the proper transform of $A_1$ under $f_2$ by $\widetilde{A_1}^{f_2}$. Let $g_2: X_{21} \to X_1$ be the blow-up of $X_1$ along $\widetilde{A_2}^{f_1}$, and let $g_1: X_{12} \to X_2$ be the blow-up of $X_2$ along $\widetilde{A_1}^{f_2}$. 

If $E_2$ is the exceptional divisor of $g_2$, and $E_1$ the proper transform under $g_2$ of the exceptional divisor of $f_1$, and likewise $F_1$ is the exceptional divisor of $g_1$ and $F_2$ the proper transform under $g_1$ of the exceptional divisor of $f_2$, then there exists an isomorphism
\begin{equation}
\phi: X_{21} \stackrel{\cong}{\to} X_{12}\nonumber.
\end{equation} 
such that
\begin{equation}
\phi^*(F_i) = E_i \textrm{ for }i=1,2. \nonumber
\end{equation}
\end{lemma}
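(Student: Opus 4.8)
The plan is to exploit the fact that blowing up is local on the base, so that two blow-ups along \emph{disjoint} centers take place in disjoint open neighborhoods and therefore commute up to canonical isomorphism. Concretely, I set $U_1 = X \setminus A_2$ and $U_2 = X \setminus A_1$. Since $A_1$ and $A_2$ are disjoint closed subvarieties, $\{U_1, U_2\}$ is an open cover of $X$ with $A_1 \subseteq U_1$, $A_2 \subseteq U_2$, and nothing to blow up on the overlap $U_1 \cap U_2 = X \setminus (A_1 \cup A_2)$. I will build the isomorphism $\phi$ by showing both towers restrict to the same pieces over this cover and then gluing.

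First I would show that over $U_1$ the iterated blow-up $X_{21} = Bl_{\widetilde{A_2}^{f_1}}(X_1)$ restricts to $Bl_{A_1}(U_1)$. The center $\widetilde{A_2}^{f_1}$ of $g_2$ lies over $A_2$, hence is disjoint from $f_1^{-1}(U_1)$, so $g_2$ is an isomorphism there; combined with the compatibility of blow-ups with the open immersion $U_1 \hookrightarrow X$, namely $Bl_{A_1}(X) \times_X U_1 \cong Bl_{A_1}(U_1)$ (\cite{cls}), this yields $X_{21}|_{U_1} \cong Bl_{A_1}(U_1)$. Symmetrically, $f_1$ is an isomorphism over $U_2$ because its center $A_1$ misses $U_2$, so $\widetilde{A_2}^{f_1}$ restricts to $A_2$ and $X_{21}|_{U_2} \cong Bl_{A_2}(U_2)$. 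The identical analysis applied to the other tower gives $X_{12}|_{U_1} \cong Bl_{A_1}(U_1)$ and $X_{12}|_{U_2} \cong Bl_{A_2}(U_2)$. Over $U_1 \cap U_2$ every restriction is just the identity on $U_1 \cap U_2$, so both $X_{21}$ and $X_{12}$ arise from gluing the same two $X$-schemes $Bl_{A_1}(U_1)$ and $Bl_{A_2}(U_2)$ along the same canonical overlap. The resulting canonical $X$-isomorphism is the desired $\phi : X_{21} \xrightarrow{\cong} X_{12}$.

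Finally I would verify $\phi^*(F_i) = E_i$ by checking it on the cover, since a divisor is determined by its restrictions to an open cover. Over $U_1$, both $E_1$ (the proper transform under $g_2$ of the exceptional divisor of $f_1$) and $F_1$ (the exceptional divisor of $g_1$) restrict, via the isomorphisms above, to the exceptional divisor of $Bl_{A_1}(U_1)$, while $E_2$ and $F_2$ both lie over $A_2$ and hence restrict to the empty divisor on $U_1$; over $U_2$ the roles are exactly reversed. Thus $E_i$ and $F_i$ agree on each chart, giving $\phi^*(F_i) = E_i$ for $i = 1, 2$.

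The step I expect to be the main obstacle is the bookkeeping in this last paragraph: one must keep straight which divisor in each tower is a genuine exceptional divisor and which is a proper transform, and confirm that taking a proper transform under a morphism that is an isomorphism over the relevant chart really does restrict to the original divisor. Once the open cover and the locality of blow-ups are set up, the remainder is routine, as every isomorphism in sight is canonical over $X$.
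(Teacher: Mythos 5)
Your proof is correct and follows essentially the same strategy as the paper's: cover the two iterated blow-ups by the two charts lying over $X \setminus A_2$ and $X \setminus A_1$, identify each chart of either tower with a single blow-up (the paper does this via the universal property of blowing up, you via locality of blow-ups under open immersions, which amounts to the same thing), glue over the common overlap $X \setminus (A_1 \cup A_2)$, and check the identification of exceptional divisors and proper transforms chart by chart. No gaps.
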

The first part of the lemma is a standard result, and is proved in greater generality in \cite{MR2595553}. It is the second part that enables us to prove that the Kapranov and Hassett orderings result in the same basis for $\Pic(\M_{0,n})$ up to isomorphism.
\begin{proof}
First, since $A_1$ and $A_2$ are disjoint, $\widetilde{A_j}^{f_i} = f_i^{-1}(A_j)$ for $i,j = 1,2$, $i \neq j$. The proof involves finding open covers of $X_{12}$ and $X_{21}$, plus isomorphisms of the elements of the covers that agree on overlap. 
\begin{displaymath}
\xymatrix{
X_{21}= Bl_{\widetilde{A_2}^{f_1}} (X_1)\ar@{.>}[rr]^{\phi_i} \ar[d]^{g_2}&& 
X_{12} = Bl_{\widetilde{A_1}^{f_2}}(X_2) \ar[d]^{g_1}\\
X_1 = Bl_{A_1}(X) \ar[dr]_-{f_1} &   & X_2 = Bl_{A_2}(X) \ar[dl]^-{f_2}\\
& X &}
\end{displaymath}
Define $U_1 = X_{21} \setminus (g_2^{-1} \circ f_1^{-1}(A_1))$ and $U_2 = X_{21} \setminus (g_2 ^{-1} \circ f_1^{-1}(A_1))$. Likewise, set $V_1 = X_{12} \setminus (g_1^{-1} \circ f_2^{-1} (A_1))$ and $V_2 = X_{12} \setminus (g_1^{-1} \circ f_2^{-1}(A_2))$. Then $\{U_1, U_2\}$ and $\{V_1, V_2\}$ define open covers $X_{21}$ and $X_{12}$, respectively.

Note that $g_2|_{U_1}: U_1 \to X_1 \setminus f^{-1}(A_1)$ is the blow-up of $X_1 \setminus f_{1}^{-1}(A_1)$ along $f_1^{-1}(A_2)$, and $f_1$ defines an isomorphism between $X_1 \setminus f_{1}^{-1}(A_1)$ and $X \setminus A_1$. Since $g_2^{-1} \circ f_1^{-1}(A_2) = E_2$ is a Cartier divisor, by the proof of the universal property of blowing-up (see \cite{MR0463157}), the unique morphism between $U_1$ and $X_2 \setminus f_2^{-1}(A_1)$ factoring $f_2$ is an isomorphism. Composing with the inverse of the isomorphism $g_1|_{V_1}:V_1 \to X_2 \setminus (f_2^{-1}(A_1))$ determines an isomorphism $\phi_1:U_1 \to V_1$. We similarly obtain an isomorphism $\phi_2: U_2 \to V_2$.

By definition, $(f_1 \circ g_2)|_{U_1 \cap U_2}: U_1 \cap U_2 \to X \setminus (A_1 \cup A_2)$ is an isomorphism, as is $(f_2 \circ g_1)|_{V_1 \cap V_2}: V_1 \cap V_2 \to X \setminus (A_1 \cup A_2)$. Moreover, $(f_2 \circ g_1)^{-1} \circ (f_1 \circ g_2)|_{U_1 \cap U_2}$ agrees with $\phi_1$ and $\phi_2$ on $U_1 \cap U_2$. Hence $\phi_1$ and $\phi_2$ glue together to give the desired isomorphism $\phi$. By construction, $\phi^{-1}(F_i) = E_i$ for $i=1,2$.
\end{proof}

Hassett's ordering of blow-ups follows the dimension of the linear centers in $\PP^{n-3}$. Specifically, we first blow-up the points (or proper transforms of) $l_1, \ldots, l_{n-1}$, then the proper transforms of the lines $l_{12}, \ldots, l_{n-2 \, n-1}$, continuing until all proper transforms of $l_J$ with $|J| = n-4$ have been blown up. 
\begin{prop}
\label{prop:KapHassBases}
There exists an isomorphism $\phi: \M^k_{0,n} \to \M^h_{0,n}$ such that 
\begin{equation}
\phi^*(H^h) = H^k \textrm{ and }\phi^*(E^h_J) = E^k_J. \nonumber 
\end{equation}
\end{prop}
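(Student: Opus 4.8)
The plan is to recognize that the Kapranov and Hassett orderings are two \emph{linear extensions} of the same partially ordered set of blow-up centers, and that any two linear extensions of a finite poset are connected by a sequence of transpositions of adjacent, incomparable elements, each of which can be realized by an isomorphism matching the Picard bases via Lemma \ref{lemma:disjointBU}. First I would record the two structural facts that both constructions share: they blow up the same collection of linear centers $\{l_J : J \subseteq \{1, \ldots, n-1\},\ 1 \leq |J| \leq n-4\}$ (the toric centers $J \subseteq \{1, \ldots, n-2\}$ together with those containing $n-1$), and, as already noted in the excerpt for each construction, each respects the partial ordering by inclusion, so that $l_J$ is blown up before $l_{J'}$ whenever $l_J \subsetneq l_{J'}$. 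Thus both are linear extensions of $(\{l_J\}, \subseteq)$, and it suffices to treat a single transposition: two centers $l_J$, $l_{J'}$ with $J \nsubseteq J'$ and $J' \nsubseteq J$ occurring consecutively in the blow-up order, with all earlier and later blow-ups identical.

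Next I would verify that at the stage these two centers are blown up their proper transforms are \emph{disjoint}, which is the hypothesis needed to invoke Lemma \ref{lemma:disjointBU}. If $J \cap J' = \emptyset$, the subspaces are already disjoint in $\PP^{n-3}$ and their proper transforms remain disjoint under all blow-ups (a common point would map down to a common point of $l_J \cap l_{J'}$). Otherwise $l_{J \cap J'}$ is a common coordinate subspace of strictly smaller dimension; since $J \cap J' \subsetneq J$ and $J \cap J' \subsetneq J'$, the inclusion-respecting property forces $l_{J \cap J'}$ to be blown up strictly before both $l_J$ and $l_{J'}$ in both orderings, hence before the consecutive pair. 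Linear subspaces intersect cleanly (as recalled before Lemma \ref{lemma:cleanPTunderBU}), so by Lemma \ref{lemma:cleanPTunderBU} \ref{item:Li2.9ii} the proper transforms of $l_J$ and $l_{J'}$ become disjoint once $l_{J \cap J'}$ is blown up, and they stay disjoint under the intervening blow-ups.

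Having disjoint centers, Lemma \ref{lemma:disjointBU} supplies an isomorphism $\phi$ between the two intermediate varieties obtained by blowing up $l_J, l_{J'}$ in the two orders, with $\phi^*$ matching the two exceptional divisors. The step I expect to be the main obstacle is to propagate this local isomorphism through all the later (identical) blow-ups and to check that \emph{every} basis element is matched, not merely the two swapped exceptional divisors. The point is that $\phi$ commutes with the projection down to the common earlier stage, so it carries the proper transform of each remaining center $l_{J''}$ on one side to its counterpart on the other; one then blows up these matched centers on both sides and extends $\phi$ inductively to an isomorphism of the final spaces. The exceptional divisors $E_{J''}$ with $J'' \neq J, J'$ are preserved because their centers correspond under $\phi$, the two swapped divisors are matched by Lemma \ref{lemma:disjointBU}, and the hyperplane class is preserved automatically since it is pulled back from $\PP^{n-3}$ through every blow-up.

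Finally, composing the isomorphisms arising from the adjacent transpositions that connect the Hassett ordering to the Kapranov ordering yields the desired isomorphism $\phi : \M^k_{0,n} \to \M^h_{0,n}$ with $\phi^*(H^h) = H^k$ and $\phi^*(E^h_J) = E^k_J$ for every admissible $J$. The only genuinely delicate bookkeeping is the inductive extension in the third paragraph, where one must keep track of the correspondence of all later centers under the local isomorphism; the geometric input reduces entirely to the disjointness statement, which the clean-intersection machinery of Lemma \ref{lemma:cleanPTunderBU} already provides.
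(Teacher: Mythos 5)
Your argument is essentially the paper's own proof: both reduce the passage between the two orderings to a sequence of adjacent transpositions of incomparable centers, establish disjointness of the relevant proper transforms via clean intersections and Lemma \ref{lemma:cleanPTunderBU}, realize each transposition by the swap isomorphism of Lemma \ref{lemma:disjointBU}, and deduce the statement for the hyperplane class from compatibility with the projection to $\PP^{n-3}$. The only difference is cosmetic --- you invoke the general fact that two linear extensions of a finite poset are connected by adjacent transpositions of incomparable elements, whereas the paper runs the corresponding bubble-sort induction explicitly (and both arguments share the same implicit step that centers with disjoint, respectively nested-free, index sets have disjoint proper transforms at the moment they are swapped).
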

The existence of such an isomorphism is well-known (see \cite{MR1957831}, \cite{MR2595553}, and \cite{mustafin}), and the claim about pull-backs of basis elements could be proven by a modular interpretation of the Kapranov basis, but a direct proof seems preferable.
\begin{proof}
In the Kapranov ordering, $l_{n-1}$ is disjoint from all linear subspaces $l_J$ whose proper transforms are blown-up before it. Hence all proper transforms of $l_{n-1}$ and such $l_J$ are also disjoint, so by Lemma \ref{lemma:disjointBU}, we may interchange the blowing-up of the proper transform of $l_{n-1}$ successively with each of the blow-ups preceding it. In particular, we may blow up $l_{n-1}$ after blowing up $l_{n-2}$ to match the Hassett ordering.

In general, suppose inductively that we have brought the proper transform of the $i^{th}$ linear subspace $l_{J_i}$ into agreement with the Hassett ordering for $i < j$. For $l_{J_i}$, $i<j$, such that $|J_i| \geq |J_j|$, we have $l_{J_i} \nsubseteq l_{J_j}$ and $l_{J_j} \nsubseteq l_{J_i}$. Lemma \ref{lemma:cleanPTunderBU} implies that, after blowing up the proper transform of $l_{J_i \cap J_j}$, the proper transforms of $l_{J_i}$ and $l_{J_j}$ are disjoint. Since $|J_i \cap J_j| < |J_j|$, we may switch the order of blow-ups of the proper transform of $l_{J_j}$ successively with each $l_{J_i}$ such that $|J_i| \geq |J_{j}|$. In particular, we may change the order so that the proper transform of $l_{J_j}$ is as in the Hassett ordering. Applying Lemma \ref{lemma:disjointBU} proves the claim about the respective exceptional divisors. The claim about pull-backs of a generic hyperplane in $\PP^{n-3}$ follows from the result about exceptional divisors plus the gluing of Lemma \ref{lemma:disjointBU}. 
\end{proof}

Via Propositions \ref{prop:M0nPBvsPT} and \ref{prop:KapHassBases}, we can identify boundary divisors of $\LM_{n-2}$ with boundary divisors $\D_J$ in $\M_{0,n}$.

\begin{defn}
\label{defnCox:lmD}
For $J \subseteq \{1, \ldots, n-2\}$, $1 \leq |J| \leq n-3$, let $\D_{J \cup \{n \}}'$ be the torus-invariant divisor $V( \langle \rho_J \rangle_{\geq 0})$.
\end{defn}
\noindent If $1 \leq |J| \leq n-4$, then $\D_{J \cup \{n\}}' = E'_J$, while for $|J| = n-3$, $\D_{J \cup \{n \}}'$ is the proper transform of the line containing all $l_{J'} \subseteq \PP^{n-3}$, with $J' \subsetneq J$. As with boundary divisors in $\M_{0,n}$, we will identify $\D_{J}'$ and $\D_{J^c}'$.
\begin{cor}
\label{corCox:pbBoundary}
For every boundary divisor class $[\D_{J}']\in \Pic( \LM_{n-2})_\QQ$,
\begin{equation}
f^*([\D_{J}']) = [\D_J] \in \Pic(\M_{0,n})_\QQ \nonumber.
\end{equation}
\end{cor}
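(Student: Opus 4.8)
The plan is to split into two cases according to whether $1 \leq |J| \leq n-4$ or $|J| = n-3$, since the divisor $\D'_{J \cup \{n\}}$ is described differently in each range (by Definition \ref{defnCox:lmD} and the remarks following it). In the first case, where $\D'_{J \cup \{n\}} = E'_J$, the statement is essentially immediate from Proposition \ref{prop:M0nPBvsPT}: that proposition already gives $f^*(E'_J) = E_J$ as divisor classes in $\Pic(\M_{0,n})$, and the only remaining point is to observe that under the identification of boundary divisors, $E_J$ is precisely the boundary class $[\D_J]$ (equivalently $[\D_{J \cup \{n\}}]$) on the $\M_{0,n}$ side. This identification is exactly what the Kapranov basis labelling of Definition \ref{def:KapBasisM} is designed to furnish.

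The substantive case is $|J| = n-3$, where $\D'_{J \cup \{n\}}$ is the proper transform of the hyperplane $l_J \subseteq \PP^{n-3}$ (the line through all $l_{J'}$ with $J' \subsetneq J$) rather than an exceptional divisor. Here I would first express $[\D'_{J \cup \{n\}}]$ in the Kapranov basis of $\LM_{n-2}$. Since this divisor is the proper transform of the hyperplane $l_J$, and since every center $l_{J'}$ with $J' \subsetneq J$, $1 \leq |J'| \leq n-4$, lies in $l_J$ and is blown up in passing from $\PP^{n-3}$ to $\LM_{n-2}$, the standard proper-transform bookkeeping for blow-ups along linear centers (the toric formula $D_{u_i} \mapsto D_{u_i} - D_u$ recalled in Section \ref{secCox:background}, applied iteratively) gives an expression of the form
\begin{equation}
[\D'_{J \cup \{n\}}] = [H'] - \sum_{\substack{J' \subsetneq J \\ 1 \leq |J'| \leq n-4}} [E'_{J'}]. \nonumber
\end{equation}
I would verify this by tracking the coefficient picked up each time a sub-center $l_{J'} \subseteq l_J$ is blown up, using that $l_J$ contains $l_{J'}$ precisely when $J' \subseteq J$; this is the same computation illustrated explicitly for $\LM_3$ in Example \ref{ex:blowupL3}.

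With this expression in hand, the pull-back is computed by the linear formula of Corollary \ref{cor:pbFormula}, which replaces $[H']$ by $[H]$ and each $[E'_{J'}]$ by $[E_{J'}]$, yielding $f^*([\D'_{J \cup \{n\}}]) = [H] - \sum_{J' \subsetneq J,\, 1 \leq |J'| \leq n-4} [E_{J'}]$. It then remains to recognize this class as the boundary class $[\D_J]$ on $\M_{0,n}$; this is the known expression for boundary divisors of $\M_{0,n}$ in the Kapranov basis, valid once the two blow-up orderings are reconciled, which Proposition \ref{prop:KapHassBases} guarantees. I expect the main obstacle to be purely bookkeeping: getting the coefficients and the index ranges in the proper-transform formula exactly right for $|J| = n-3$, and confirming that the resulting class agrees with the standard Kapranov-basis description of $[\D_J]$ rather than leaving a discrepancy in some $|J'|$ stratum. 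No genuinely new geometric input beyond Propositions \ref{prop:M0nPBvsPT} and \ref{prop:KapHassBases} and Corollary \ref{cor:pbFormula} should be needed, since $f^*$ is linear on $\Pic$ and both cases reduce to applying that linearity to an already-known basis expansion.
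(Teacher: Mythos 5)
Your proposal is correct and follows essentially the same route as the paper, which states this corollary as an immediate consequence of Proposition \ref{prop:M0nPBvsPT} (giving $f^*(E'_J)=E_J$), Proposition \ref{prop:KapHassBases} (reconciling the blow-up orderings so that the Kapranov labels match the standard boundary labels), and the standard Kapranov-basis dictionary $[\D_{J\cup\{n\}}]=[H]-\sum_{J'\subsetneq J}[E_{J'}]$ for $|J|=n-3$. Your case split and the proper-transform bookkeeping for the hyperplane $l_J$ are exactly the details the paper leaves implicit, and they check out against Example \ref{ex:blowupL3}.
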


The final ingredient for the proof of Theorem \ref{thm:main} is an isomorphism between the global sections of a divisor in $\LM_{n-2}$ and the global sections of its pull-back in $\M_{0,n}$. That the induced map on global sections is an isomorphism is a standard result, and holds also for much more general situations.

\begin{lemma} 
\label{lemmaCox:pbH0}
For $f: \M_{0,n} \to \LM_{n-2}$ as above, and $D$ a divisor on $\LM_{n-2}$,
\begin{equation}
f^*: H^0(\LM_{n-2}, D) \stackrel{\cong}{\to} H^0(\M_{0,n}, f^*(D)) \nonumber.
\end{equation}
\end{lemma}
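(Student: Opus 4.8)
The plan is to deduce the statement from the more basic fact that $f$, being a composition of blow-ups of smooth centers inside smooth varieties, satisfies $f_* \cO_{\M_{0,n}} = \cO_{\LM_{n-2}}$, where the identification is the natural (adjunction) map. Both $\M_{0,n}$ and $\LM_{n-2}$ are smooth and projective, and each individual blow-up $g : Bl_Z(W) \to W$ of a smooth subvariety $Z$ in a smooth variety $W$ is a proper birational morphism onto a normal variety; hence $g_* \cO_{Bl_Z(W)} = \cO_W$ by Zariski's main theorem, the fibers of $g$ being connected. Since the property $g_*\cO = \cO$ composes — for $X \xrightarrow{g} Y \xrightarrow{h} Z$ one has $(h \circ g)_* \cO_X = h_*(g_* \cO_X) = h_* \cO_Y = \cO_Z$ — the natural map $\cO_{\LM_{n-2}} \to f_* \cO_{\M_{0,n}}$ is an isomorphism.

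Next I would record the consequence via the projection formula. For any divisor $D$ on $\LM_{n-2}$, since $f$ is proper and $\cO_{\LM_{n-2}}(D)$ is locally free,
\[
f_*\bigl( f^* \cO_{\LM_{n-2}}(D) \bigr) \;\cong\; \cO_{\LM_{n-2}}(D) \otimes f_* \cO_{\M_{0,n}} \;\cong\; \cO_{\LM_{n-2}}(D),
\]
and under this chain of isomorphisms the unit of adjunction $\cO_{\LM_{n-2}}(D) \to f_*\bigl( f^* \cO_{\LM_{n-2}}(D) \bigr)$ corresponds to the identity on $\cO_{\LM_{n-2}}(D)$ tensored with the map $\cO_{\LM_{n-2}} \to f_* \cO_{\M_{0,n}}$ from the previous step. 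Hence the unit is itself an isomorphism of sheaves on $\LM_{n-2}$.

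Finally I would identify the map $f^*$ on global sections with this unit. The pullback homomorphism factors as
\[
H^0(\LM_{n-2}, \cO(D)) \longrightarrow H^0\bigl( \LM_{n-2}, f_* f^* \cO(D) \bigr) = H^0\bigl( \M_{0,n}, f^* \cO(D) \bigr),
\]
where the first arrow is induced by the unit map and the equality is the tautological adjunction $H^0(Y, f_* \cG) = H^0(X, \cG)$. As the unit is an isomorphism of sheaves by the preceding paragraph, the induced map on $H^0$ is an isomorphism, which is exactly the assertion of the lemma.

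The routine but essential input is $f_* \cO_{\M_{0,n}} = \cO_{\LM_{n-2}}$; the only subtlety worth flagging as the main obstacle is to verify that the abstract isomorphism produced by the projection formula genuinely coincides with the geometric pullback $f^*$ on sections, rather than merely matching dimensions. This is why I route the entire argument through the unit of the $(f^*, f_*)$-adjunction: it makes the naturality automatic and sidesteps any ambiguity about which isomorphism is meant.
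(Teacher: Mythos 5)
Your proposal is correct and follows essentially the same route as the paper: establish $f_*\cO_{\M_{0,n}} = \cO_{\LM_{n-2}}$ via Zariski's main theorem for the proper birational morphism $f$ onto a normal variety, then apply the projection formula to conclude $H^0(\M_{0,n}, f^*\mathcal{L}) \cong H^0(\LM_{n-2}, \mathcal{L})$. Your additional care in checking that the abstract isomorphism is realized by the unit of the $(f^*, f_*)$-adjunction, and hence genuinely agrees with the geometric pullback on sections, is a point the paper passes over silently but is a welcome refinement rather than a different argument.
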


\begin{proof}
We first set $\mathcal{L} = \cO_{\LM_{n-2}}(D)$ (note that $\LM_{n-2}$ is smooth). Since $f$ is birational and projective, and both $\M_{0,n}$ and $\LM_{n-2}$ are integral, noetherian schemes with $\LM_{n-2}$ normal, it follows, as in the proof of Zariski's main theorem (see \cite{MR0463157}, Corollary III.11.4), that $f_*(\cO_{\M_{0,n}}) = \cO_{\LM_{n-2}}$. By the projection formula, $f_* f^* \mathcal{L} \cong (f_* \cO_{\M_{0,n}}) \otimes \mathcal{L}$, hence $ H^0(\M_{0,n}, f^*(\mathcal{L})) \cong H^0(\LM_{n-2}, f_* f^*(\mathcal{L})) \cong H^0(\LM_{n-2}, \mathcal{L})$.
\end{proof}

\begin{proof}[Proof of Theorem \ref{thm:main}]
Lemma \ref{lemmaCox:pbH0} shows that the induced map $f^*: \cox(\LM_{n-2}) \to \cox(\M_{0,n})$ is injective, with image 
\begin{equation*}
\bigoplus_{(m_h, m_J) \in \ZZ^{2^{n-2} - n +1}} H^0(\M_{0,n}, m_h H + \sum_{J \subsetneq \{1, \ldots, n-2\}} m_J E_J),
\end{equation*}
Corollary \ref{cor:pbFormula} shows that $f^*$ respects the $\Pic$-grading, and Corollary \ref{corCox:pbBoundary} shows that, on the level of boundary section generators, the map is given by $x_{\Delta'_J} \to x_{\Delta_J}$.
\end{proof}

Thus far, we have been discussing a single permutohedral space $\LM_{n-2}$, where, in relation to $\M_{0,n}$, we take the points labeled by $(n-1)$ and $n$ to be the poles (or equivalently, the $(n-1)^{\textrm{st}}$ marked point corresponds to the non-toric point $l_{n-1} \in \PP^{n-3}$, and the $n^{\textrm{th}}$ marked point is the `moving' point). The choice of poles, however, is arbitrary, since permuting the marked points results in isomorphic copies of $\M_{0,n}$ . Hence there are $\binom{n}{2}$ permutohedral spaces that have the same relationship with $\M_{0,n}$ as that described in this section by choosing different pairs of points for the poles; we denote these varieties as $\LM_{n-2}(i,j)$.

Since $\cox(\LM_{n-2}(i,j))$ is a polynomial ring in the variables $x_{\D'_J}$, where $i \in J$, $J \subseteq (\{1, \ldots, n\} \setminus \{j\})$, and $2 \leq |J| \leq n-3$, Theorem \ref{thm:main} establishes $\binom{n}{2}$ polynomial rings $\cox(\LM_{n-2})(i,j)$ sitting inside $\cox(\M_{0,n})$, i.e., whose preimage in the polynomial ring of generators (assuming finite generation) intersects the ideal of relations trivially.

Before considering an application to Pl\"ucker relations in $\cox(\M_{0,n})$, we note that Theorem \ref{thm:main} gives a partial answer to the Riemann-Roch problem for $\M_{0,n}$: for any divisor with a section in the image of the map $f^*$ (or its analogue by choosing different poles) the dimension of the space of global sections equals that of the corresponding torus-invariant divisor in $\LM_{n-2}(i,j)$, which can be calculated by counting lattice points (see for example Section 4.3 of \cite{cls}). It would be interesting to obtain a closed formula for these dimensions.
\selectlanguage{english}


\section{Pl\"ucker relations and pull-backs of hyperplane classes}
\label{secCox:CoxFull}
We conclude by applying Theorem \ref{thm:main} to show that relations in certain degrees of $\cox(\M_{0,6})$ defined by pull-backs from projective spaces are generated by the Pl\"ucker relations (defined below). The result is given for $n=6$, but the proof is such that it would hold for higher $n$ given a technical assumption on additional (yet to be established) generators. The proof mirrors part of the proof of the Batyrev-Popov conjecture about quadratic generation of the Cox ring of del Pezzo surfaces given in \cite{MR2529093}. It is our hope that such an inductive approach could establish the correctness of a conjectural presentation of $\cox(\M_{0,6})$. 

We begin by defining degrees $[F_{J,m}] \in \Pic(\M_{0,n})$ as pull-backs of hyperplane classes of projective spaces under compositions of forgetful and Kapranov morphisms, then calculate the dimension of the space of global sections of these degrees. This dimension turns out to be smaller than the number ways that $[F_{J,m}]$ can be represented as an effective sum of boundary divisors, hence the number of generators of this part of $\cox(\M_{0,n})$ is bigger than its dimension, and there must be relations in this $\Pic(\M_{0,n})$-degree. Finally, we show that all relations in boundary section variables in the degrees $[F_{J,m}]$ are generated by the Pl\"ucker relations when $n=6$. For $\M_{0,5}$ the Pl\"ucker relations generate the entire ideal of relations (see \cite{MR2029863}), while for $n=6$, there are known non-Pl\"ucker-generated relations (see Example 6.6 of \cite{hilbChow}). 

We now fix some additional notation and make a few further remarks about the Kapranov construction. An important class of morphisms between moduli spaces of stable pointed rational curves is the \emph{forgetful} morphisms, corresponding to forgetting a subset of the marked points and then, if necessary, stabilizing (see \cite{MR702953} or \cite{MR1034665} for further details). Since we will be forgetting varied subsets $J \subseteq \onetn$, we will keep track of which points are remembered by labeling the target space as $\M_{0,\onetn \setminus J}$.

Kapranov showed in \cite{MR1203685} that the morphism $t: \M_{0,n} \to \PP^{n-3}$ is induced by the psi-class $\psi_n$ of the `moving' point. In Definition \ref{def:KapBasisM} we chose the $n^{\textrm{th}}$ point to be moving, but the same holds for any $m \in \{1, \ldots, n\}$. We denote the corresponding morphism by $t_m: \M_{0,n} \to \PP^{n-3}$. In particular, the hyperplane class of the Kapranov basis for $m$ as the moving point is $\psi_m$. To define the divisor classes $[F_{J,m}]$, fix a subset $J \subseteq \onetn$ with $0 \leq |J| \leq n-4$, and choose $m \notin J$. 
For simplicity, we will choose $J \subseteq \{1, \ldots, n-1\}$, and take $m=n$, since the general situation can be obtained from this one the action of the symmetric group $S_n$ on $\M_{0,n}$: for $i \neq j$, the change of basis on $\Pic(\M_{0,n})$ is given by 
\begin{align}
\psi_j &= (n-3)\psi_i - \sum_{\substack{J \subseteq (\onetn \setminus\{i,j\}) \\ 1 \leq |J| \leq n-4}}(n-|J| - 3) [\D_{J \cup \{i\}}], \\
[\D_{J \cup \{j\}}] &= \left\{ \begin{array}{ll}
				[\D_{J \cup \{j\}}] & \textrm{if }i \in J,\\
				\textrm{$[$} \D_{(J \cup \{j\})^c} \textrm{$]$} & \textrm{if } i \notin J, |J| > 1,\\
\psi_i - \sum\limits_{T \subsetneq (J \cup \{i,j\})^c} [\D_{T \cup \{i\}}] & \textrm{if }i \notin J \textrm{ and }|J| = 1
\end{array} \right. \nonumber
\end{align}
\noindent (see \cite{brunoMella} for the first formula, though it also follows easily from the expressions for psi-classes determined in \cite{MR1733327}).

Define $\phi_{J,n} = t_n \circ \pi_{J}: \M_{0,n} \to \PP^{n - |J| - 3}$; this morphism is induced by the divisor class $[F_{J,n}] = \phi_{J,n}^*(\cO_{\PP^{n - |J| - 3}}(1))$. We can thus determine divisor representatives by studying how the hyperplane class on $\PP^{n-|J| - 3}$ pulls-back under $\phi_{J,n}$, or, equivalently, how psi-classes pull-back under the forgetful morphisms $\pi_J$. We begin with a lemma from \cite{MR1733327}.

\begin{lemma}
\label{lemma:acpsi}
For $q \in P \subseteq \onetn$ and $p \in P- \{q\}$ the pull-back of $\psi_p$ under $\pi_{q}: \M_{0,P} \to \M_{0, P\setminus\{q\}}$ is given by  
\begin{gather}
\label{eq:pbpsi}
\pi_{q}^*(\psi_p) = \psi_p - [\Delta_{pq}]
\end{gather}  
\end{lemma}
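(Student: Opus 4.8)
The plan is to upgrade the divisor identity to an isomorphism of line bundles and then take first Chern classes. Writing $\mathbb{L}_p$ for the cotangent line bundle at the marked point $p$ on $\M_{0,P}$ and $\mathbb{L}_p'$ for the analogous bundle on $\M_{0,P\setminus\{q\}}$, so that $\psi_p = c_1(\mathbb{L}_p)$ and $\psi_p = c_1(\mathbb{L}_p')$ respectively, the goal is to establish
\[
\pi_q^*\,\mathbb{L}_p' \;\cong\; \mathbb{L}_p \otimes \cO_{\M_{0,P}}(-\Delta_{pq}),
\]
from which the formula $\pi_q^*\psi_p = \psi_p - [\Delta_{pq}]$ follows immediately by taking $c_1$.

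For the setup I would use Knudsen's identification of the forgetful morphism $\pi_q : \M_{0,P} \to \M_{0,P\setminus\{q\}}$ with the universal curve $\cC \to \M_{0,P\setminus\{q\}}$, under which the forgotten point $q$ plays the role of the varying point on the fiber and each remaining label $i$ gives a tautological section $\sigma_i$. The key geometric input is that the image of $\sigma_p$ is precisely the boundary divisor $\Delta_{pq}$: it is the locus where the varying point $q$ has collided with $p$ and bubbled off a rational component carrying exactly $p$, $q$, and the node. Over the complement of $\Delta_{pq}$, forgetting $q$ requires no stabilization.

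The heart of the argument is the natural comparison map $\pi_q^*\mathbb{L}_p' \to \mathbb{L}_p$ induced by the fiberwise stabilization (contraction) morphism $c : C \to \bar C$ pulling back cotangent vectors at $p$. Away from $\Delta_{pq}$ the contraction is a local isomorphism near $x_p$, so this map is an isomorphism there; along $\Delta_{pq}$, the contraction collapses the bubble containing $p$ and $q$, so its differential at $x_p$ degenerates and the comparison map drops rank. I would then carry out a local computation in coordinates transverse to $\Delta_{pq}$ — parametrizing the smoothing of the node and the position of $p$ on the contracted bubble — to verify that the map is injective with cokernel of length exactly one along $\Delta_{pq}$. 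This identifies $\pi_q^*\mathbb{L}_p'$ with the subsheaf $\mathbb{L}_p(-\Delta_{pq})$ and yields the displayed isomorphism.

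I expect the main obstacle to be exactly this transverse local analysis: checking that the order of vanishing of the comparison map along $\Delta_{pq}$ is precisely one rather than higher, which requires an explicit deformation of the universal curve at a point where the $p$--$q$ bubble is contracted. In the genus-zero setting a shortcut is available and worth noting: since psi-classes on $\M_{0,P}$ are numerically expressible in terms of boundary divisors, the difference $\pi_q^*\psi_p - \psi_p$ is supported on the boundary, and the contraction morphism alters the cotangent line at $p$ only across $\Delta_{pq}$; hence every boundary coefficient except that of $\Delta_{pq}$ vanishes, and the remaining coefficient can be pinned to $-1$ by restricting to a one-parameter test family meeting $\Delta_{pq}$ transversally.
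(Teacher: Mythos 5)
The paper offers no proof of this lemma; it is quoted as a known result from Arbarello--Cornalba \cite{MR1733327}, so there is no in-paper argument to compare against. Your sketch is, in outline, exactly the standard proof from that source: identify $\pi_q$ with the universal curve, note that the image of the tautological section $\sigma_p$ is $\Delta_{pq}$, and show that the natural comparison map $\pi_q^*\mathbb{L}_p' \to \mathbb{L}_p$ induced by the stabilization contraction is an isomorphism away from $\Delta_{pq}$ and vanishes to order exactly one along it. That is the right argument, and your own assessment of where the work lies is accurate.

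Two cautions about how you propose to close it. In the ``shortcut,'' the inference from ``the difference is supported on the boundary'' to ``all boundary coefficients other than that of $\Delta_{pq}$ vanish'' is not valid at the level of divisor classes, since the boundary classes are linearly dependent in $\Pic(\M_{0,n})_\QQ$; what you should say instead is that the comparison map is a nonzero map of line bundles whose degeneracy locus is an effective divisor set-theoretically contained in the irreducible divisor $\Delta_{pq}$, hence equal to $m\Delta_{pq}$ for some integer $m\ge 1$. And when you pin $m=1$ on a test family --- the natural choice being a fiber $F$ of $\pi_q$, where $\pi_q^*\psi_p\cdot F=0$ and $\Delta_{pq}\cdot F=1$ --- you still need the independent input $\deg(\mathbb{L}_p|_F)=1$, which is precisely the first-order degeneration of the cotangent line at $p$ as $q$ crosses it that you deferred; it is, however, now a computation in a one-parameter family (essentially on $\M_{0,4}\cong\PP^1$), where it is elementary. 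With that single local computation supplied, the proof is complete and correct.
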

\noindent A second fact of use is a special case of a lemma of Keel from \cite{MR1034665}.
\begin{lemma}
For $q \in P \subseteq \onetn$ and $T \subseteq P\setminus\{q\}$, the pullback of $[\D_T]$ under $\pi_q: \M_{0,P} \to \M_{0, P\setminus\{q\}}$ is
\begin{gather}
\label{eq:pbdelta}
\pi^*_{q}([\D_T]) = [\D_T] + [\D_{T \cup \{q\}}].
\end{gather}
\end{lemma}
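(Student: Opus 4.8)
The plan is to compute the pullback geometrically, by first identifying the set-theoretic preimage $\pi_q^{-1}(\D_T)$ and then determining the multiplicity with which each irreducible component occurs. Throughout I would use that $\pi_q \colon \M_{0,P} \to \M_{0,P\setminus\{q\}}$ realizes the source as the universal curve over the target: a point of $\M_{0,P}$ lying over a class $[C] \in \M_{0,P\setminus\{q\}}$ records a choice of position on $C$ for the forgotten marked point $q$ (after stabilizing).

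First I would determine $\pi_q^{-1}(\D_T)$ set-theoretically. A general member of $\D_T \subseteq \M_{0,P\setminus\{q\}}$ is a two-component curve $C = C_1 \cup C_2$, with the points of $T$ on $C_1$ and those of $(P\setminus\{q\})\setminus T$ on $C_2$. Reintroducing $q$, its position can lie on $C_1$, on $C_2$, or at the node. If $q$ lands on the $C_1$-side, the resulting curve has a node separating $T\cup\{q\}$ from the rest, so it lies in $\D_{T\cup\{q\}}$; if $q$ lands on the $C_2$-side, the node separates $T$ from the rest, so the curve lies in $\D_T$ (now a divisor in $\M_{0,P}$); and if $q$ sits at the node, stabilization sprouts a rational bridge carrying $q$, placing the curve in $\D_T \cap \D_{T\cup\{q\}}$. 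Since forgetting a point can only remove nodes, every curve mapping into $\D_T$ is itself nodal, so each component of $\pi_q^{-1}(\D_T)$ is a boundary divisor; checking which $\D_S$ dominate $\D_T$ (those with $S\setminus\{q\} = T$, up to the identification $\D_S = \D_{S^c}$) leaves exactly $\D_T$ and $\D_{T\cup\{q\}}$. Hence $\pi_q^{-1}(\D_T) = \D_T \cup \D_{T\cup\{q\}}$, the two components meeting along the bridge curves.

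Next I would pin down the multiplicities. Since $\M_{0,P\setminus\{q\}}$ is smooth, $\D_T$ is an effective Cartier divisor, so $\pi_q^*[\D_T]$ is represented by the scheme-theoretic preimage, and its coefficient along each of $\D_T$ and $\D_{T\cup\{q\}}$ equals the order of vanishing of the pulled-back local equation at the corresponding generic point. Near the generic point of either component the forgotten point $q$ sits in the interior of a component, away from the node, so forgetting it leaves that node untouched; because the standing convention $2 \leq |T| \leq |P\setminus\{q\}|-2$ guarantees that no component is destabilized, the node persists, and its smoothing parameter downstairs pulls back to a smoothing parameter (a uniformizer) upstairs. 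Equivalently, the universal curve has reduced nodal fibers, so over the generic point of $\D_T$ the two reduced components $C_1, C_2$ yield two reduced divisorial components upstairs. In either formulation the pullback of the node coordinate vanishes to order one along each, giving coefficient $1$ on both $\D_T$ and $\D_{T\cup\{q\}}$, and hence $\pi_q^*([\D_T]) = [\D_T] + [\D_{T\cup\{q\}}]$.

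The main obstacle is this multiplicity computation, and in particular controlling the behaviour along the overlap $\D_T \cap \D_{T\cup\{q\}}$ where $q$ degenerates to the node. The set-theoretic identification is routine, but ruling out higher multiplicities requires the local deformation-theoretic input above; the role of the convention $|T| \geq 2$ (ensuring that no forgotten point triggers a contraction which would collapse the node into the smooth locus) is exactly what makes the generic-point analysis clean, and I would verify it explicitly in étale-local coordinates on the versal deformation $xy = t$ of the node, where forgetting a smooth point away from the node fixes the smoothing parameter $t$.
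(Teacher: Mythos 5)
Your argument is correct, but note that the paper does not actually prove this statement: it is quoted verbatim as a special case of a lemma of Keel (cited from his intersection-theory paper on $\M_{0,n}$), so there is no internal proof to compare against. What you have written is essentially the standard proof that the cited reference supplies, and it is sound. The two nontrivial points are exactly the ones you isolate: (1) the set-theoretic identification $\pi_q^{-1}(\D_T) = \D_T \cup \D_{T\cup\{q\}}$, where your check that the only boundary divisors $\D_S$ of $\M_{0,P}$ dominating $\D_T$ are those with $S\setminus\{q\}=T$ up to complementation is the step most often skipped; and (2) the multiplicity-one claim, which your local computation on the versal deformation $xy=t$ of the node handles correctly --- at the generic point of either component the forgotten point $q$ lies away from the node, the stabilization is an isomorphism near the node, and the smoothing parameter downstairs pulls back to a uniformizer along the corresponding component upstairs. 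Your appeal to the convention $2 \le |T| \le |P\setminus\{q\}|-2$ to rule out contractions is the right place to use it. If you wanted to shorten the multiplicity step, you could instead observe that $\pi_q$ is flat with reduced one-dimensional fibers (the universal curve), so the scheme-theoretic preimage of the reduced Cartier divisor $\D_T$ is generically reduced along each component; but your generic-point computation is equally valid and arguably more transparent.
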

\noindent The final ingredient is that if $J = \{j_1, \ldots, j_k\}$ (allowing the possibility $J = \emptyset$), then the forgetful morphism $\pi_J$ decomposes as $\pi_J = \pi_{j_1} \circ \ldots \circ \pi_{j_k}$ (or as any re-ordering). Combining, we obtain the following.
\begin{lemma}
 For $J \subseteq \{ 1, \ldots, n-1\}$, the pullback of $\psi_n$ under 
$\pi_{J}: \M_{0,\onetn} \to \M_{0,\onetn \setminus J}$
is
\begin{gather}\label{eq:pbJ}
\pi_J^*(\psi_n) = \psi_n - \sum_{T \subseteq J} [\D_{T \cup \{n\}}]
\end{gather}
\end{lemma}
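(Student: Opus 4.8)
The plan is a straightforward induction on the cardinality $|J|$, feeding the two single-step formulas \eqref{eq:pbpsi} and \eqref{eq:pbdelta} into the decomposition $\pi_J = \pi_{j_1}\circ\cdots\circ\pi_{j_k}$ recorded just above. The base case $J = \emptyset$ is trivial, since $\pi_\emptyset$ is the identity and the right-hand side of \eqref{eq:pbJ} collapses to $\psi_n$ (its only term, $[\D_{\{n\}}]$, being $0$ under the standing convention $2\leq|J|\leq n-2$ on boundary indices).

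For the inductive step I would single out one forgotten point, say $j \in J$, and write $\pi_J = \pi_{J\setminus\{j\}}\circ\pi_j$, where $\pi_j\colon\M_{0,\onetn}\to\M_{0,\onetn\setminus\{j\}}$ forgets $j$ and $\pi_{J\setminus\{j\}}$ forgets the remaining points; the reordering invariance of forgetful maps lets me choose this order freely. Pulling back contravariantly, $\pi_J^*(\psi_n) = \pi_j^*\bigl(\pi_{J\setminus\{j\}}^*(\psi_n)\bigr)$, so I apply the inductive hypothesis to the inner factor (with $n$ still the distinguished moving point on $\M_{0,\onetn\setminus\{j\}}$) and then hit the result with the single-step $\pi_j^*$. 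By linearity this separates into $\pi_j^*(\psi_n)$, evaluated by \eqref{eq:pbpsi} as $\psi_n - [\D_{\{j,n\}}]$, and the boundary terms $\pi_j^*([\D_{T\cup\{n\}}])$ for $T\subseteq J\setminus\{j\}$, each evaluated by \eqref{eq:pbdelta} as $[\D_{T\cup\{n\}}] + [\D_{T\cup\{j,n\}}]$. It then remains to recognize that the surviving terms $[\D_{T\cup\{n\}}]$ account for the subsets of $J$ omitting $j$, while the terms $[\D_{T\cup\{j,n\}}]$ account for those containing $j$; together they reassemble $\sum_{U\subseteq J}[\D_{U\cup\{n\}}]$, which is exactly \eqref{eq:pbJ}.

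The one place demanding care — and the only real obstacle in what is otherwise pure bookkeeping — is the term indexed by $T = \emptyset$ in the inductive hypothesis. There $[\D_{\emptyset\cup\{n\}}] = [\D_{\{n\}}]$ is zero by the singleton convention, so $\pi_j^*$ contributes nothing from it, rather than the spurious pair $[\D_{\{n\}}] + [\D_{\{j,n\}}]$ that a mechanical application of \eqref{eq:pbdelta} would suggest. The divisor $[\D_{\{j,n\}}]$ that must nonetheless appear with multiplicity one is instead furnished by the separate summand $-[\D_{\{j,n\}}]$ coming from $\pi_j^*(\psi_n)$ via \eqref{eq:pbpsi}. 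Verifying that these two effects combine to produce each $[\D_{U\cup\{n\}}]$ with $U\subseteq J$ precisely once — in particular in the boundary case $U=\{j\}$ — is what makes the reindexing come out cleanly, and completes the induction.
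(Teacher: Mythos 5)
Your proof is correct and is essentially the paper's own argument: induction on $|J|$, peeling off one forgotten point and feeding \eqref{eq:pbpsi} and \eqref{eq:pbdelta} into the decomposition of $\pi_J$; the paper's version builds $J\cup\{q\}$ up from $J$ rather than removing $j$ from $J$, which is the same induction. Your explicit handling of the $T=\emptyset$ term (where $[\D_{\{n\}}]$ vanishes by the convention on boundary indices, so that $[\D_{\{j,n\}}]$ enters only once, via $\pi_j^*(\psi_n)$) is exactly the bookkeeping the paper performs implicitly.
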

\begin{proof}
The case $|J|=1$ is Lemma \ref{lemma:acpsi} of Arbarello and Cornalba, so we assume inductively the validity of Equation (\ref{eq:pbJ}) for all $J \subseteq \{1, \ldots, n-1\}$, $|J| = k < n-1$. Let $q \in \{1, \ldots, n-1\} \setminus J$. Then
\begin{align}
\pi^*_{J \cup \{q\}}(\psi_n) &= \pi^*_{q} \circ \pi^*_J (\psi_n) \nonumber \\
	&=\pi^*_q (\psi_n - \sum_{T \subseteq J} [\D_{T \cup \{n\}}]) \nonumber \\
	&=\psi_n - [\D_{qn}] - \sum_{T \subseteq J} ([\D_{T \cup \{n\}}] + [\D_{T \cup \{q,n\}}]) \nonumber \\
	&= \psi_n - \sum_{T \subseteq (J \cup \{q\})} [\D_{T \cup \{n\}}] \nonumber,
\end{align}
as desired.
\end{proof}
Therefore, with respect to the Kapranov basis,
\begin{gather}
\label{eq:FIKap}
[F_{J,n}] = [H] - \sum_{T \subseteq J} [E_T].
\end{gather}

Before considering how to represent the divisor classes $[F_{J,n}]$ as effective sums of boundary, we show that these classes give a basis for $\Pic(\M_{0,n})_\QQ$. 
\begin{prop}
The collection of divisor classes,
\begin{gather}
\{[F_{J,n}]: J \subseteq \{1, \ldots, n-1\}, 0 \leq |J| \leq n-4 \}, \nonumber
\end{gather}
determines a basis of $\Pic(\M_{0,n})_\QQ$.
\end{prop}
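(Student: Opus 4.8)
The plan is to show that the change of basis matrix between the set $\{[F_{J,n}]\}$ and the Kapranov basis $\{[H], [E_J]\}$ is invertible over $\QQ$. Both collections are indexed by subsets $J \subseteq \{1, \ldots, n-1\}$ with $0 \leq |J| \leq n-4$ (where $J = \emptyset$ corresponds to the hyperplane class $[F_{\emptyset, n}] = [H]$), so both have cardinality equal to the Picard number $2^{n-2} - n + 1$. Hence it suffices to prove that the $[F_{J,n}]$ are linearly independent, or equivalently that the expressing matrix is nonsingular. The key input is Equation (\ref{eq:FIKap}), which gives the explicit triangular-looking expansion
\begin{equation*}
[F_{J,n}] = [H] - \sum_{T \subseteq J} [E_T].
\end{equation*}

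First I would fix an ordering of the index subsets that is compatible with the partial order by inclusion, for instance refining $\subseteq$ to a total order in which $|T| \leq |T'|$ forces $T$ to precede $T'$. With respect to such an ordering, I would examine the coefficient matrix $A$ whose $(J, T)$-entry records the coefficient of $[E_T]$ in $[F_{J,n}]$ (treating $[H] = [E_\emptyset]$ uniformly if convenient). By (\ref{eq:FIKap}), this entry is $-1$ when $T \subseteq J$ and $0$ otherwise, so $A$ is, up to sign and the handling of the $[H]$ row, exactly the incidence (zeta) matrix of the Boolean lattice of subsets of $\{1, \ldots, n-1\}$ restricted to the relevant ranks. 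This zeta matrix is unitriangular with respect to the inclusion-compatible ordering, since its diagonal entries (the case $T = J$) are all $\pm 1$ and all entries violating $T \subseteq J$ vanish below the diagonal.

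The cleanest way to finish is to invoke Möbius inversion on the Boolean lattice: the zeta matrix of any finite poset is invertible over $\QQ$ (indeed over $\ZZ$), with inverse given by the Möbius function $\mu(T, J) = (-1)^{|J \setminus T|}$. Concretely, this lets me write each $[E_J]$ back in terms of the $[F_{T,n}]$ via
\begin{equation*}
[E_J] = [H] - \sum_{T \subseteq J} (-1)^{|J \setminus T|} [F_{T,n}],
\end{equation*}
or a similar inversion formula, thereby exhibiting the Kapranov basis inside the span of the $[F_{J,n}]$ and proving the latter is a spanning set; combined with the cardinality count, this gives a basis. Alternatively, one simply notes that an upper-unitriangular matrix has determinant $\pm 1 \neq 0$.

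The main obstacle I anticipate is purely bookkeeping rather than conceptual: I must verify that the index set $0 \leq |J| \leq n-4$ is closed downward under inclusion (it is, since $T \subseteq J$ implies $|T| \leq |J|$), so that every $[E_T]$ appearing in the expansion of $[F_{J,n}]$ is itself a genuine member of the Kapranov basis and no indices fall outside the allowed range. I would also need to handle the $[H]$ term consistently — either by absorbing it as the $T = \emptyset$ case of the exceptional divisors or by peeling off a single row/column and checking the remaining block stays unitriangular. Once the indexing is pinned down, the triangularity argument is immediate and the invertibility follows with no serious computation.
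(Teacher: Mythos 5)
Your argument is essentially the paper's: both count that the collection has cardinality equal to $\dim \Pic(\M_{0,n})_\QQ$ and then observe that, with the index sets ordered compatibly with inclusion (equivalently, by increasing $|J|$), the change-of-basis matrix to the Kapranov basis read off from Equation (\ref{eq:FIKap}) is triangular with $\pm 1$ on the diagonal, hence invertible. (Your optional M\"obius-inversion display has a sign/term error --- for $J \neq \emptyset$ the correct inversion is $[E_J] = -\sum_{T \subseteq J} (-1)^{|J \setminus T|}[F_{T,n}]$, with no extra $[H]$ --- but this is not load-bearing, since the unitriangularity already gives linear independence.)
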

\begin{proof}
The first thing to note is that the number of divisor classes $[F_{J,n}]$ is
\begin{align*}
1 + \binom{n-1}{2} + \binom{n-1}{3} + \ldots + \binom{n-1}{n-4}
& = 2^{n-1} - \binom{n}{2} - 1,
\end{align*}
which is precisely the dimension of $\Pic(\M_{0,n})_\QQ$. It therefore suffices to show that these classes are linearly independent. 

Set $\rho = \dim \Pic(\M_{0,n})_\QQ$ and fix an isomorphism $\Pic(\M_{0,n})_\QQ \cong \QQ^\rho$ by choosing the Kapranov basis, ordered in the usual way: 
\begin{gather*}
\cB_K = \{[H], [E_1], \ldots, [E_{n-1}], [E_{12}], [E_{13}], \ldots [E_{4 \ldots n-1}]\}.
\end{gather*}
We similarly order the set of classes $[F_{J,k}]$ as 
\begin{align*}
\cB_F = \{&[H], [H] - [E_1], [H] - [E_2], \ldots, \\
&[H] - [E_4] - [E_5] - \ldots - [E_{n-1}] - \ldots - [E_{4 \ldots n-1}] \}.
\end{align*}
Writing each element of $\cB_F$ in the coordinates given by the Kapranov basis $\cB_K$, we see that the $i^{\mathrm{th}}$ coordinate of the $i^{\mathrm{th}}$ element of $\cB_F$ will be -1 (except for $i=1$, when the coordinate is just 1), with all $j^{\mathrm{th}}$ coordinates, $j>i$, equal to 0. Hence the matrix whose columns are the coordinate vectors of the $[F_{J,n}]$ is upper triangular, with the $(i,i)$ entry equal to 1 if $i=1$, and $-1$ otherwise, thus showing that the $[F_{J,n}]$ are linearly independent.
\end{proof}

To obtain integral effective sums of boundary divisors whose class is $[F_{J,n}]$, we vary the representative of $[H]$, as in the second set of expressions in the following dictionary among boundary divisors, exceptional divisors, and the hyperplane class of $\M_{0,n}$ (see \cite{MR1882122}): 
\begin{align}
\D_{J \cup \{n\}} &= E_J, \textrm { if } 1 \leq |J| \leq n-4, \nonumber\\
[\D_{J \cup \{n\}}] & = [H] - \bigg(\sum_{J' \subsetneq J}[E_{J'}] \bigg), \textrm{ if } |J| = n-3. \nonumber
\end{align}

If $\{a,b\} = (J \cup \{n\})^c$ for $|J| = n-3$, the second set of equalities can be rewritten as
\begin{gather}
\label{eq:dictionaryDab}
[\D_{ab}] = [H] - \bigg( \sum_{J' \subsetneq \{a,b,n\}^c} [E_{J'}]   \bigg).
\end{gather}

\begin{prop}
\label{prop:effFJn}
All effective integral representations of the classes $[F_{J,n}]$ as boundary divisors are obtained by varying the representative of the hyperplane class as in Equation (\ref{eq:dictionaryDab}). In particular, there are $\binom{n-|J| - 1}{2}$ representatives of $[F_{J,n}]$ as an integral, effective sum of boundary divisors.
\end{prop}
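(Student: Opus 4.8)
The plan is to reduce the statement to a rigid linear-algebra computation in the Kapranov basis, where the effectivity requirement becomes a system of sign constraints that pins down every representation.

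First I would fix a clean parametrization of the boundary divisors. Using the identification $\D_S = \D_{S^c}$, every boundary divisor of $\M_{0,n}$ has a unique representative $\D_{K \cup \{n\}}$ with $K \subseteq \{1, \ldots, n-1\}$ and $1 \leq |K| \leq n-3$. By the dictionary preceding Equation (\ref{eq:dictionaryDab}), its Kapranov class is $[E_K]$ when $1 \leq |K| \leq n-4$, and $[H] - \sum_{\emptyset \neq K' \subsetneq K}[E_{K'}]$ when $|K| = n-3$. The structural feature I want to record is that the coefficient of $[H]$ in $[\D_{K \cup \{n\}}]$ equals $1$ precisely when $|K| = n-3$, and $0$ otherwise; I will call the former divisors \emph{hyperplane-type}. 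This bookkeeping with $\D_S = \D_{S^c}$ is the step requiring the most care, since it is exactly what collapses an a priori unbounded search over effective sums into a finite, explicitly described family.

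Next I would write an arbitrary effective integral representation $\sum_K a_K [\D_{K \cup \{n\}}] = [F_{J,n}]$ with $a_K \in \ZZ_{\geq 0}$ and compare coefficients against $[F_{J,n}] = [H] - \sum_{\emptyset \neq T \subseteq J}[E_T]$ from Equation (\ref{eq:FIKap}). Reading off the $[H]$-coefficient gives $\sum_{|K| = n-3} a_K = 1$; since the $a_K$ are nonnegative integers, \emph{exactly one} hyperplane-type divisor occurs, and with multiplicity one. Writing its index as $K_0 = \{a,b,n\}^c$, so that $\D_{K_0 \cup \{n\}} = \D_{ab}$, this is precisely the substitution $[H] = [\D_{ab}] + \sum_{K' \subsetneq \{a,b,n\}^c}[E_{K'}]$ of Equation (\ref{eq:dictionaryDab}). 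Comparing the coefficient of each $[E_T]$ then forces $a_T$ to equal the indicator of $T \subsetneq K_0$ minus the indicator of $T \subseteq J$, so the entire representation is determined by the single choice of $K_0$.

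The remaining content is the nonnegativity constraint. The condition $a_T \geq 0$ for all $T$ holds if and only if every nonempty $T \subseteq J$ also satisfies $T \subsetneq K_0$, which, since $|J| \leq n-4 < n-3 = |K_0|$, is equivalent to $J \subseteq K_0$, i.e. $\{a,b\} \cap J = \emptyset$. Hence the admissible representations correspond bijectively to the unordered pairs $\{a,b\} \subseteq \{1, \ldots, n-1\} \setminus J$, of which there are $\binom{n - |J| - 1}{2}$, and each arises exactly by varying the hyperplane representative as in Equation (\ref{eq:dictionaryDab}). I expect no genuine obstacle beyond the parametrization bookkeeping above: once the Kapranov classes are recorded, the crux is simply the observation that forcing the $[H]$-coefficient to be a nonnegative integer summing to $1$ isolates a unique hyperplane-type boundary divisor, after which the rest of the representation and the effectivity range are completely determined.
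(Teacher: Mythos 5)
Your proof is correct, and it takes a genuinely different route from the paper's. The paper works upstream in the vector space $\qbnd$ spanned by the boundary divisors: it constructs an explicit basis $\{r_{ax}, r_{yz}\}$ of $\ker(\cl)$ out of differences $h_{ab} - h_{12}$ of hyperplane representatives, writes an arbitrary effective representative as $f_{J,n}$ plus a kernel element, and then argues from the coefficient of $\D_{12}$ that at most one basis element of the kernel can be added. You instead work downstream in $\Pic(\M_{0,n})_\QQ$: you expand an arbitrary nonnegative integral combination $\sum_K a_K[\D_{K\cup\{n\}}]$ in the Kapranov basis and read off the $[H]$-coordinate, so that $\sum_{|K|=n-3} a_K = 1$ immediately isolates a unique hyperplane-type divisor $\D_{ab}$, after which the $[E_T]$-coordinates determine every $a_T$ and the nonnegativity constraint reduces to $\{a,b\}\cap J = \emptyset$. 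Your argument is shorter and makes the uniqueness of the hyperplane-type summand transparent, since it never has to construct or verify a basis of the $\binom{n}{2}-n$-dimensional kernel; what the paper's approach buys in exchange is that explicit kernel basis, which encodes how any two effective representatives differ and is the kind of data one wants when hunting for relations in $\cox(\M_{0,n})$ later in the section. Both proofs ultimately rest on the same dictionary $\D_{T\cup\{n\}} = E_T$ for $1 \leq |T| \leq n-4$ and $[\D_{ab}] = [H] - \sum_{T \subsetneq \{a,b,n\}^c}[E_{T}]$, and your bookkeeping with the identification $\D_S = \D_{S^c}$ (unique representative containing $n$) is handled correctly.
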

\begin{proof}
Let $\qbnd$ denote the vector space with basis elements indexed by the boundary divisors of $\M_{0,n}$, and $\orthb$ the first orthant, which corresponds to all effective sums of boundary divisors with rational coefficients (we use throughout the identification $\D_T = \D_{T^c}$). By \cite{MR1034665}, the map $\cl: \qbnd \to \Pic(\M_{0,n})_\QQ$, defined by taking a sum of boundary divisors to its class, is surjective with the kernel having dimension $\binom{n}{2} - n$. 

We label the representative for the hyperplane class from Equation (\ref{eq:dictionaryDab}) as $H_{ab}$, and the corresponding element of $\orthb$ as $h_{ab}$, which we write as
\begin{gather}
h_{ab} = \sum_{\substack{a,b \in T^c , \\ n \in T}} \D_T.
\end{gather}
\noindent By the action of the symmetric group, we may assume that $\{a,b\} = \{1,2\}$. 

Consider the following elements of $\ker (\cl)$: for $a \in \{1,2\}$ and $x \in \{1,2,n\}^c$, define $r_{ax} = h_{ax} - h_{12}$, while for $\{y,z \} \subseteq \{1,2,n\}^c$, define $r_{yz} = h_{yz} - h_{12}$. Note that the cardinality of this set is $2(n-3) + \binom{n-3}{2} = \binom{n}{2} - n$. Hence to show that this collection determines a basis of $\ker(\cl)$, it suffices to show linear independence.

Suppose that
\begin{gather}
\sum_{\substack{a \in \{1,2\},\\ x \in \{1,2,n\}^c}} c_{ax} r_{ax} + \sum_{y<z \in \{1,2,n\}^c} c_{yz} r_{yz} = 0 \nonumber
\end{gather}
in $\qbnd$. The coefficient of $\D_{ax}$ in the above expression is $c_{ax}$, while the coefficient of $\D_{yz}$ is $c_{yz}$, hence all coefficients vanish.

Assume that $J \cap \{1,2\} = \emptyset$ (recall that $|J| \leq n-4$), so that $f_{J,n} = h_{12} - \sum_{T \subseteq J} \D_T$ is effective. Now consider $d \in \orthNb$ such that $d - f_{J,n} \in \ker (\cl)$, thus giving
\begin{gather}
\label{eq:effFJn}
d = h_{12} - \sum_{T \subseteq J} \D_{J \cup \{n\}} + \sum_{\substack{a \in \{1,2\},\\ x \in \{1,2,n\}^c}} c_{ax} r_{ax} + \sum_{y<z \in \{1,2,n\}^c} c_{yz} r_{yz}
\end{gather}
for unique $c_{ax}$ and $c_{yz}$. The proof of linear independence above implies that all coefficients are integral and non-negative, since none of the boundary divisors $\D_{ax}, \D_{yz}$ above appears in $f_{J,n}$. The coefficient of $\D_{12}$ on the right hand side above is $1 - \sum c_{ax} - \sum c_{yz} \geq 0$, so at most one of the coefficients can be non-zero.

We conclude the proof by determining which choices of $a,x,y$, and $z$ give effective representatives. If some $c_{ax}$ equals one, then $\D_{ax}$ appears with coefficient $-1$ if $x \in J$, and coefficient zero otherwise (recall that $J \subseteq \{1,2,n\}^c$). Likewise, if one of the $c_{yz}$ equals one, then the coefficient of $\D_{yz}$ is non-negative if and only if $\{y,z\} \cap J = \emptyset$. If $x \notin J$ and $c_{ax} = 1$, it is easy to see that $d = h_{ax} - \sum_{T \subseteq J} \D_{J \cup \{n\}} \in \orthNb$, while for $y,z \notin J$ with $c_{yz} = 1$, we obtain $d = h_{yz} - \sum_{T \subseteq J} \D_{J \cup \{n\}} \in \orthNb$.

In summary, the effective integral representatives for $[F_{J,n}]$ are in bijection with choices $a,b \in (J \cup \{n\})^c$, hence there are exactly $\binom{n-|J| - 1}{2}$ such representatives.
\end{proof}

To conclude that there are relations in the degree $[F_{J,n}]$ part of $\cox(\M_{0,n})$, the final ingredient is to calculate $h^0(\M_{0,n}, [F_{J,n}])$.
\begin{lemma}
\label{lemma:h0F}
The dimension of $H^0(\M_{0,n}, [F_{J,n}])$ is $n - |J| - 2$.
\end{lemma}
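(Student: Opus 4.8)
The plan is to recognize $[F_{J,n}]$ as the pull-back of a hyperplane class along a morphism to projective space and then reduce the computation of $h^0$ to that projective space, reusing the push-forward argument of Lemma \ref{lemmaCox:pbH0}. By construction $[F_{J,n}] = \phi_{J,n}^*(\cO_{\PP^{n-|J|-3}}(1))$, and since $\phi_{J,n} = t_n \circ \pi_J$ with $t_n^*(\cO_{\PP^{n-|J|-3}}(1)) = \psi_n$, I would rewrite this as $[F_{J,n}] = \pi_J^*(\psi_n)$, where $\psi_n$ now denotes the corresponding class on $\M_{0,\onetn \setminus J}$. Since $\onetn \setminus J$ has $n-|J|$ elements, the target $\PP^{n-|J|-3}$ has $h^0(\cO(1)) = (n-|J|-3)+1 = n-|J|-2$, which is precisely the asserted value, so the entire proof amounts to showing that pulling back along $\phi_{J,n}$ leaves $H^0$ unchanged, step by step.

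First I would dispose of the birational piece. The Kapranov morphism $t_n \colon \M_{0,\onetn \setminus J} \to \PP^{n-|J|-3}$ is a composition of blow-ups, hence a birational projective morphism between integral noetherian schemes with smooth (in particular normal) target. Lemma \ref{lemmaCox:pbH0} therefore applies verbatim to give $H^0(\M_{0,\onetn \setminus J}, \psi_n) \cong H^0(\PP^{n-|J|-3}, \cO(1))$, a space of dimension $n-|J|-2$. The substance then lies in the forgetful piece, namely the isomorphism $H^0(\M_{0,n}, \pi_J^*(\psi_n)) \cong H^0(\M_{0,\onetn \setminus J}, \psi_n)$.

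For this I would follow the template of Lemma \ref{lemmaCox:pbH0}: by the projection formula it suffices to prove $(\pi_J)_* \cO_{\M_{0,n}} = \cO_{\M_{0,\onetn \setminus J}}$. The difficulty, and the main obstacle of the proof, is that $\pi_J$ is \emph{not} birational, so Zariski's main theorem does not apply directly as it did for $t_n$. Instead I would factor $\pi_J = \pi_{j_1} \circ \cdots \circ \pi_{j_k}$ into single-point forgetful morphisms and treat each $\pi_q \colon \M_{0,P} \to \M_{0,P \setminus \{q\}}$ as the universal curve over $\M_{0, P \setminus \{q\}}$. Each fiber is a connected, reduced stable genus-zero curve $C$, so $H^0(C, \cO_C) = \CC$ and $H^1(C,\cO_C)=0$; cohomology and base change (Grauert's theorem, using flatness of $\pi_q$) then yields that the unit map $\cO_{\M_{0,P \setminus \{q\}}} \to (\pi_q)_*\cO_{\M_{0,P}}$ is an isomorphism. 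Composing over the factors gives $(\pi_J)_* \cO_{\M_{0,n}} = \cO_{\M_{0,\onetn \setminus J}}$, whence the projection formula supplies the desired isomorphism on global sections.

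Chaining the two isomorphisms yields $h^0(\M_{0,n}, [F_{J,n}]) = h^0(\PP^{n-|J|-3}, \cO(1)) = n-|J|-2$, as claimed. Everything beyond the structure-sheaf push-forward for the forgetful morphism is a direct appeal to Lemma \ref{lemmaCox:pbH0} together with the elementary dimension count on projective space; the fiberwise genus-zero connectedness argument is the only place where genuinely new input is required.
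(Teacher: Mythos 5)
Your proof is correct and follows essentially the same route as the paper: both reduce to showing that the structure sheaf pushes forward to the structure sheaf along $\phi_{J,n}$ (the paper phrases this as $\phi_{J,n}$ being an algebraic fiber space, using connectedness of fibers of the universal curve and normality of the target, while you verify $(\pi_q)_*\cO = \cO$ directly via Grauert), and then both conclude with the projection formula and the count $h^0(\PP^{n-|J|-3},\cO(1)) = n-|J|-2$. The only cosmetic difference is that you chain two separate $H^0$-isomorphisms (one for $t_n$ via Lemma \ref{lemmaCox:pbH0}, one for $\pi_J$) where the paper treats the composite morphism at once.
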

\begin{proof}
The proof follows by showing that $\phi_{J,n} = t_n \circ \pi_J: \M_{0,n} \to \PP^{n - |J| - 3}$ is an algebraic fiber space, followed by an application of the projection formula. The algebraic fiber space property requires that $\phi_{J,n}$ be a surjective projective morphism between reduced, irreducible varieties with connected fibers (this last condition is equivalent to the usual requirement for $f:X \to Y$ that $f_*(\cO_X) = \cO_Y$ since projective space is normal---see Definition 2.1.11 of \cite{MR2095471}).

The composition of blow-ups $t_n: \M_{0, n - |J|} \to \PP^{n - |J| - 3}$ clearly satisfies the necessary properties. Forgetful morphisms are proper and surjective with connected fibers, since each is a composition of forgetful morphisms forgetting a single point, while in the case of a single point, the morphism $\pi_{j}: \M_{0,n} \to \M_{0, n-1}$ is in fact the universal curve over $\M_{0,n-1}$ (\cite{MR702953}). Since the composition $\phi_{J,n}$ is projective, it follows that $\phi_{J,n}$ determines an algebraic fiber space. The rest of the proof now proceeds as in Lemma \ref{lemmaCox:pbH0}.
\end{proof}

\begin{cor}
The $[F_{J,m}]$-graded parts $\cox(\M_{0,n})$ have relations.
\end{cor}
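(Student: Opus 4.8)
The plan is to prove the corollary by a direct dimension count, comparing in each degree $[F_{J,n}]$ the number of available boundary monomials against the dimension of the corresponding graded piece of $\cox(\M_{0,n})$. Writing $k = n - |J|$, both inputs are already in hand: Proposition \ref{prop:effFJn} produces exactly $\binom{k-1}{2}$ distinct effective boundary representatives of $[F_{J,n}]$, each of which is a monomial in the boundary section variables $x_{\D_T}$, while Lemma \ref{lemma:h0F} computes $\dim \cox(\M_{0,n})_{[F_{J,n}]} = h^0(\M_{0,n}, [F_{J,n}]) = k-2$.

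First I would note that the $\binom{k-1}{2}$ representatives furnished by Proposition \ref{prop:effFJn} are pairwise distinct monomials in the boundary generators of $\cox(\M_{0,n})$, hence linearly independent as elements of the free polynomial ring on the Cox generators. Their images under the presentation map all lie in the single graded component $\cox(\M_{0,n})_{[F_{J,n}]}$, whose dimension is $k-2$. Consequently, once $\binom{k-1}{2} > k-2$, these monomials must satisfy a nontrivial linear dependence in $\cox(\M_{0,n})$, and any such dependence is by definition a nonzero element of the ideal of relations sitting in degree $[F_{J,n}]$; in fact the kernel has dimension at least $\binom{k-1}{2} - (k-2)$, so one obtains at least that many independent relations.

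It then remains to verify the inequality over the admissible range $0 \leq |J| \leq n-4$. The difference
\begin{equation*}
\binom{k-1}{2} - (k-2) = \frac{(k-2)(k-3)}{2}
\end{equation*}
is strictly positive exactly when $k \geq 4$, i.e. when $|J| \leq n-4$, which is precisely the standing hypothesis on $J$. Finally, the restriction to $J \subseteq \{1,\ldots,n-1\}$ and $m = n$ is harmless: applying the $S_n$-action on $\M_{0,n}$ encoded in the change-of-basis formulas above carries $[F_{J,n}]$ to $[F_{J,m}]$ for any $m \notin J$, so relations persist in every such degree.

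The argument is essentially forced once Proposition \ref{prop:effFJn} and Lemma \ref{lemma:h0F} are available, so I do not expect a serious obstacle. The only points deserving a moment's care are in the second paragraph: confirming that the distinct effective representatives really do yield distinct generator-monomials (so that the detected linear dependence is genuinely an element of the ideal of relations, rather than an artifact of coinciding monomials), and checking that the counting inequality is \emph{strict} throughout the range rather than merely non-strict.
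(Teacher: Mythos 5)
Your argument is correct and is essentially the paper's own proof: both compare the $\binom{n-|J|-1}{2}$ monomials from Proposition \ref{prop:effFJn} against the dimension $n-|J|-2$ from Lemma \ref{lemma:h0F} and observe that the difference $\binom{n-|J|-2}{2}$ is positive precisely because $|J| \leq n-4$. The extra care you flag (distinctness of the monomials, strictness of the inequality) is already built into Proposition \ref{prop:effFJn} and the standing hypothesis on $J$, so nothing further is needed.
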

\begin{proof}
By Proposition \ref{prop:effFJn} and Lemma \ref{lemma:h0F}, there are
\begin{gather*}
\binom{n - |J| - 1}{2} - (n - |J| - 2) = \binom{n - |J| - 2}{2}
\end{gather*}
linearly independent relations in the degree $[F_{J,m}]$. By assumption, $|J| \leq n-4$, so this number is positive.
\end{proof}

Since generators of $\cox(\M_{0,n})$ have not been determined for $n \geq 7$, it is possible that the classes $[F_{J,n}]$ can be expressed as non-negative sums of other effective divisors, which would increase the number of generators of the $[F_{J,n}]$-graded part of $\cox(\M_{0,n})$. In the next lemma, however, we show that this is not the case for Keel-Vermeire divisors, which are effective extremal divisors on $\M_{0,n}$ ($n \geq 6$) whose classes cannot be represented as effective sums of boundary divisors. While there are known to be additional extremal divisors in $\M_{0,n}$ (see \cite{ct}), their classes have not yet been calculated.

With respect to the Kapranov blow-up construction, the Keel-Vermeire divisor $Q_{(12)(34)}$ of $\M_{0,n}$ is the proper transform of the unique quadric surface in $\PP^3$ passing through the points $l_1, \ldots, l_5$ and containing the lines $l_{13}, l_{14}, l_{23}, l_{24}$ (recall notational convention \ref{notation:pt}). The remaining fourteen Keel-Vermeire divisors result from permuting the indices $1, \ldots, 5$ (note that $Q_{(ab)(cd)} = Q_{(cd)(ab)}$). We refer to \cite{MR1882122} for further discussion.

The class of $Q_{(12)(34)}$ in $\Pic(\M_{0,n})_\QQ$ is
\begin{gather*}
[Q_{(12)(34)}] = 2 H - \sum_{i=1}^5 [E_i] - \sum_{\substack{a \in \{1,2\}, \, b \in \{3,4\}}} [E_{ab}].
\end{gather*}
For higher $n$, the Keel-Vermeire divisors are pull-backs of those on $\M_{0,6}$. 

\begin{lemma}
\label{lemma:noKV}
The classes $[F_{J,n}]$ cannot be represented as an effective sum involving Keel-Vermeire divisors.
\end{lemma}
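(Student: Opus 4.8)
The plan is to separate $[F_{J,n}]$ from the Keel-Vermeire divisors by a single linear functional that is non-negative on all effective classes. It suffices to show that $[F_{J,n}] - [Q]$ is not the class of an effective divisor for any Keel-Vermeire divisor $Q$: indeed, if a monomial of degree $[F_{J,n}]$ in the Cox-ring generators were divisible by a Keel-Vermeire variable $x_Q$, the quotient monomial would exhibit $[F_{J,n}] - [Q]$ as a non-negative integral combination of effective (boundary and Keel-Vermeire) classes, hence as effective. The functional I would use is $\lambda \colon \Pic(\M_{0,n})_\QQ \to \QQ$ reading off the coefficient of $[H]$ in the Kapranov basis, i.e. the unique linear map with $\lambda([H]) = 1$ and $\lambda([E_T]) = 0$ for all $T$. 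Equation (\ref{eq:FIKap}) gives $\lambda([F_{J,n}]) = 1$ at once.

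The first step is to realize $\lambda$ as intersection against a moving curve, so that $\lambda \geq 0$ on every effective class. I would let $\tilde{\ell} \subseteq \M_{0,n}$ be the proper transform under $t_n \colon \M_{0,n} \to \PP^{n-3}$ of a general line $\ell \subseteq \PP^{n-3}$. Every center blown up in the Kapranov construction is a linear subspace $l_J$ of codimension at least two (the centers have $\dim l_J = |J| - 1 \leq n-5$, as does the point $l_{n-1}$ and the spans through it), so a general line avoids the finite union of all centers; thus $\tilde{\ell} \cong \ell$, it meets no exceptional divisor, and $H \cdot \tilde{\ell} = 1$, $E_T \cdot \tilde{\ell} = 0$. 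Hence $\lambda(D) = D \cdot \tilde{\ell}$ for every class $D$. Since lines through two general points sweep out $\PP^{n-3}$, the curves $\tilde{\ell}$ cover a dense open subset of $\M_{0,n}$, so a general $\tilde{\ell}$ is not contained in any fixed effective divisor $D$, giving $D \cdot \tilde{\ell} \geq 0$. In particular $\lambda \geq 0$ on the whole effective cone (so the boundary divisors require no separate treatment).

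The main computation, and the step I expect to be the real obstacle, is to check that $\lambda([Q]) = 2$ for \emph{every} Keel-Vermeire divisor $Q$. For $n = 6$ this is immediate from the displayed class of $Q_{(12)(34)}$, which has $[H]$-coefficient $2$; every other Keel-Vermeire divisor is the image of $Q_{(12)(34)}$ under a permutation in $S_5$ of the non-moving indices $\{1,\ldots,5\}$, and such permutations fix $H = \psi_6$ while merely permuting the exceptional classes $E_T$ among themselves, so they preserve $\lambda$ and all fifteen divisors have $\lambda = 2$. For $n > 6$ each $Q$ is a pull-back $\pi_K^*(Q_0)$ of a Keel-Vermeire divisor on $\M_{0,6}$, with $n \notin K$ so that $n$ remains the moving point. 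Writing $[Q_0] = 2[H] - \sum(\text{exceptional})$ and applying the pull-back formulas (\ref{eq:pbpsi}) and (\ref{eq:pbdelta}), I would check that $\lambda(\pi_K^*\psi_n) = 1$ and that $\pi_K^*$ of each exceptional class has $\lambda = 0$: the correction terms are divisors $\D_{T \cup \{n\}}$ with $|T| \leq |K| = n - 6 \leq n-4$, so each equals some $E_T$ and contributes nothing to $\lambda$. This yields $\lambda([Q]) = 2 \cdot 1 - 0 = 2$ for all $n$.

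With these facts the lemma follows immediately. If $[F_{J,n}] - [Q]$ were effective for some Keel-Vermeire divisor $Q$, then by the moving-curve estimate $\lambda([F_{J,n}] - [Q]) \geq 0$, whereas the computations above give $\lambda([F_{J,n}] - [Q]) = 1 - 2 = -1 < 0$, a contradiction. Therefore no effective representation of $[F_{J,n}]$ can involve a Keel-Vermeire divisor.
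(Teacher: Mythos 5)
Your proof is correct and follows essentially the same approach as the paper's: both intersect with the proper transform of a general line in $\PP^{n-3}$ (equivalently, read off the coefficient of $[H]$ in the Kapranov basis), observe that this functional is non-negative on effective classes because the line is a moving curve, and compare the values $1$ on $[F_{J,n}]$ and $2$ on Keel-Vermeire classes. Your explicit verification that every Keel-Vermeire class has $[H]$-coefficient $2$ for all $n$ (via the $S_5$-action and the pull-back formulas) is a detail the paper leaves implicit but is a welcome addition.
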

\begin{proof}
Let $l$ be the proper transform of a generic line in $\PP^{n-3}$ in $\M_{0,n}$. Then by the projection formula, $l$ intersects all exceptional divisors trivially, while $l \cdot H = 1$, so $l \cdot [F_{J,n}] = 1$. Since $l$ intersects all classes of effective divisors non-negatively, the intersection of $l$ with any divisor class represented by a Keel-Vermeire plus other effective divisors is greater than or equal to two, hence no such representative is possible.
\end{proof}

Note that the claim of Lemma \ref{lemma:noKV} also holds for any generator of $\cox(\M_{0,n})$ having intersection number greater than one with a generic line. For the remainder, we consider only $n = 6$, though proofs will be given that would apply for higher $n$ if an analog of Lemma \ref{lemma:noKV} were to apply for remaining generators.

Denote generators of $\cox(\M_{0,6})$ by
\begin{gather*}
\cG = \{x_J, y_{(ab)(cd)}: J \textrm{ boundary index}; a,b,c,d \in \{1, \ldots, 5\}\},
\end{gather*}
and consider the map
\begin{gather*}
\CC[\cG]_{[F_{J,m}]} \to \cox(\M_{0,6})_{[F_{J,m}]},
\end{gather*}
where the subscript denotes the $[F_{J,m}]$-graded parts of these rings, and the kernel is denoted by $I$. We conclude by showing that all relations in the $[F_{J,m}]$ parts of $I$ are generated by Pl\"ucker relations.
\begin{defn}
Let $I_{pl} \subseteq I$ be the relations generated in degrees $F_{J,m}$ with $|J| = n-4$, that is, corresponding to pull-backs of $\cO_{\PP^1}(1)$ under all forgetful morphisms $\pi_J: \M_{0,n} \to \M_{0,4} \cong \PP^1$. We call elements of $I_{pl}$ \emph{Pl\"ucker relations}, and label them $p_I$. 

Explictly, the Pl\"ucker relation for $I = \{i,j,k,l\}$ is
\begin{gather}
\label{eq:PlueckerRels}
p_I = \prod_{\substack{i,j \in T , \\ k,l \in T^c}} x_T - \prod_{\substack{i,k \in T , \\ j,l \in T^c}} x_T + \prod_{\substack{i,l \in T , \\ j,k \in T^c}}x_T 
\end{gather}

\noindent Each monomial in the $p_I$ is called a \emph{Pl\"ucker monomial}.
\end{defn}

\begin{prop} 
\label{prop:Pluecker}
The Pl\"ucker relations generate all relations in the $[F_{J,m}]$-degrees of $\cox(\M_{0,6})$.
\end{prop}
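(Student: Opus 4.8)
The plan is to leverage the dimension counts already in hand and to exhibit, in each relevant degree, enough explicit products of Pl\"ucker relations with monomials to fill out the space of relations. Fix the moving point $m=6$ throughout; since the $S_6$-action permutes the classes $[F_{J,m}]$ (via the change-of-basis formulas above), it is enough to treat one representative for each value of $|J|$, and I may take the index $5$ to be the general-position point $l_{n-1}$. By Lemma \ref{lemma:noKV} no Keel--Vermeire variable can appear in a monomial of degree $[F_{J,m}]$, so Proposition \ref{prop:effFJn} shows that $\CC[\cG]_{[F_{J,m}]}$ is spanned by the $\binom{5-|J|}{2}$ boundary monomials coming from the effective representatives; combined with $h^0(\M_{0,6},[F_{J,m}])=4-|J|$ from Lemma \ref{lemma:h0F}, the relation space has dimension $\binom{5-|J|}{2}-(4-|J|)=\binom{4-|J|}{2}$. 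Since $0\le|J|\le 2$, the case $|J|=2$ gives a $1$-dimensional relation space, whose generator is by definition the Pl\"ucker relation $p_I$ with $I=\{1,\dots,6\}\setminus J$. This is the base case, and it remains to handle $|J|\in\{0,1\}$.

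For these degrees the inclusion of the Pl\"ucker-generated part into the full relation space is automatic, so by the dimension count it suffices to produce $\binom{4-|J|}{2}$ linearly independent Pl\"ucker-generated relations of degree $[F_{J,m}]$. I would construct them as follows: for each $2$-subset $K\supseteq J$ with $5\notin K$, put $I=\{1,\dots,6\}\setminus K$ and form the monomial $u_K=\prod_{\emptyset\neq T\subseteq K,\ T\not\subseteq J}x_{T\cup\{6\}}$. Using Equation (\ref{eq:FIKap}) one checks $\deg(u_K\,p_I)=[F_{J,m}]$, so $u_K\,p_I$ lies in the Pl\"ucker-generated part of this degree, and the number of admissible $K$ is exactly $\binom{4-|J|}{2}$ (the $2$-subsets of $\{1,2,3,4\}$ when $J=\emptyset$, and the pairs $\{1,s\}$, $s\in\{2,3,4\}$, when $J=\{1\}$).

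The crux is then to expand each product $u_K\,p_I$ in the boundary monomials and read off its shape. Writing $h_{cd}$ for the boundary monomial of the effective representative indexed by $\{c,d\}$, I expect the simplification (carried out by rewriting each monomial through $\D_T=\D_{T^c}$) to yield a three-term relation $h_{pq}-h_{p5}+h_{q5}$, up to sign, where $\{p,q\}$ is the pair of ``coordinate'' indices not cut out by $K$; for instance a direct check gives $x_{16}x_{26}x_{126}\cdot p_{\{3,4,5,6\}}=h_{34}-h_{35}+h_{45}$. The decisive observation is that the distinguished term $h_{pq}$, whose index pair avoids the general point $5$, occurs in this one product and in no other member of the family, since every other product contributes that monomial only through terms involving the index $5$. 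This gives linear independence of the $\binom{4-|J|}{2}$ relations, hence they span $I_{[F_{J,m}]}$, and the proposition follows. The main obstacle is exactly this bookkeeping: performing the monomial simplification carefully enough to isolate the distinguished term and to confirm that every factor $u_K\,M$ of a product is again one of the effective representatives of Proposition \ref{prop:effFJn}. Organizing the computation through the $S_6$-symmetry and the model $\phi_{J,m}\colon\M_{0,6}\to\PP^{3-|J|}$ (under which the $h_{ab}$ become the linear forms $x_j$ and $x_r-x_s$, making the syzygies transparent) is what keeps it under control.
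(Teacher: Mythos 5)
Your argument is correct, but it takes a genuinely different route from the paper's. The paper does not count dimensions here: it invokes Theorem \ref{thm:main} to get the isomorphism $f^*\colon\cox(\LM_{4})_{[F'_J]}\to\cox(\M_{0,6})_{[F_J]}$, factors it through $(\CC[\cG]/I_{pl})_{[F_J]}$, and shows that every monomial $m_J(a,b)$ is congruent modulo $I_{pl}$ (via the Pl\"ucker relation for $\{a,b,5,6\}$) to a combination of monomials pulled back from $\LM_4$; surjectivity of the first map in the factorization then forces the second to be injective, i.e.\ $I_{[F_J]}=(I_{pl})_{[F_J]}$. You instead compute $\dim I_{[F_J]}=\binom{5-|J|}{2}-(4-|J|)=\binom{4-|J|}{2}$ from Proposition \ref{prop:effFJn}, Lemma \ref{lemma:h0F}, and Lemma \ref{lemma:noKV} (plus the known generators for $n=6$, needed to pin down $\dim\CC[\cG]_{[F_J]}$), and then exhibit that many linearly independent products $u_K\,p_{K^c}$. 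I verified your normal form: with $K=\{1,2\}$ one indeed gets $x_{16}x_{26}x_{126}\,p_{\{3,4,5,6\}}=h_{34}-h_{35}+h_{45}$, the general case is the analogous identity after dividing by $\prod_{6\in T\subseteq J\cup\{6\}}x_T$, and the leading term $h_{pq}$ with $5\notin\{p,q\}$ does occur in exactly one member of the family, so independence holds. Your approach buys an explicit basis of $(I_{pl})_{[F_J]}=I_{[F_J]}$ in each degree and bypasses Theorem \ref{thm:main} entirely, at the cost of the exact dimension count and the monomial bookkeeping; the paper's argument is softer, needs no count of $\dim\CC[\cG]_{[F_J]}$, and is the version that propagates to higher $n$ under the stated hypothesis on additional generators.
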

\begin{proof}
Since $|J| \leq n-4 = 2$, by the action of the symmetric group, we can assume that the index sets of `forgotten points' $J$ are contained in $\{1, \ldots, 4\}$, and hence we will write $F_J$ instead of $F_{J,m}$.

Consider a forgetful index $J \subseteq\{1, \ldots, 4\}$. As in the proof of Proposition \ref{prop:effFJn}, all monomials in degree $[F_J]$ are given by picking $a,b \in \{1, \ldots, 5\} \setminus J$ and taking in Equation (\ref{eq:FIKap}) the corresponding representative $H_{ab}$ of the hyperplane class as given by Equation (\ref{eq:dictionaryDab}). Hence the monomials in $\CC[\cG]_{[F_J]}$ have the form
\begin{gather*}
m_J(a,b) = \big( \prod_{\substack{a,b \in T^c , \\ 6 \in T}} x_T \big)/ \big(\prod_{\substack{ 6 \in T\\ J \cup \{6\} \supseteq T}} x_T\big) = \prod_{\substack{a,b \in T^c, 6 \in T , \\ J \cup \{6\} \nsupseteq T}} x_T.
\end{gather*}

Let $k \in (\{a,b,6\} \cup J)^c$. As above, we can write the monomial $m_J(a,b)$ as
\begin{gather}
\label{eq:mJab}
m_J(a,b) = \big(\prod_{\substack{a,b,k \in T^c, \\ 6 \in T, \\ J \cup \{6\} \nsupseteq T}} x_T \big) \big(\prod_{\substack{a,b \in T^c, \\k, 6 \in T}} x_T \big),
\end{gather}
where we have omitted the condition $J \nsubseteq T$ in the second term, since $k \in T$ implies that $T \nsubseteq J \cup \{6\}$. Note that the second product on the right hand side of Equation (\ref{eq:mJab}) is a Pl\"ucker monomial.

We next show that each $[F_J]$ is a pull-back from a permutohedral space $\LM_{4}(i,j)$. Recall that we have assumed $J \subseteq \{1, \ldots, 4\}$, and by $\LM_{4}$ we mean that the points labeled by $5$ and $6$ are taken to be the poles. The claim now follows immediately, since each $J' \subsetneq J$ also labels an exceptional divisor of $\LM_{4}$ under the composition of blow-ups $f: \M_{0,6} \to \LM_{4}$, and so by Corollary \ref{cor:pbFormula}, if we define
\begin{gather*}
F'_J = H' - \sum_{T \subsetneq J} E'_{T},
\end{gather*}
 then $[F_J] = f^*[F'_J]$.


To conclude, we consider the following commutative diagram:
\begin{displaymath} 
\xymatrix{
& 
(\CC[\cG]/I_{pl})_{[F_{J}]}
\ar[dr]^{g}
&
\\
(\cox(\LM_{4}))_{[F'_{J}]}
\ar[ur]^{i}
\ar[rr]_{\cong}^{f^*}
&&
(\cox(\M_{0,6}))_{[F_{J}]}
}
\end{displaymath}
where $i$ is the map $x_T \mapsto x_T + I_{pl}$, and $g$ is the surjection
\begin{gather*}
(\CC[\cG]/I_{pl})_{[F_J]} \twoheadrightarrow (\CC[\cG]/I)_{[F_J]} \cong (\cox(\M_{0,6}))_{[F_J]}.
\end{gather*}
The proposition will be proven once we show that $i$ is also surjective.

Let $m_J(a,b)$ be a monomial in $\CC[\cG]_{[F_J]}$ as in Equation (\ref{eq:mJab}). Note that all terms of the monomial $m_J(a,5)$ are already in the image of $i$, so we may assume that $5 \notin \{a,b\}$. We may therefore choose $k$ of Equation (\ref{eq:mJab}) to be $5$. Modulu $I_{pl}$, we may rewrite the second product using the Pl\"ucker relation $(a,b,5,6)$ to give
\begin{gather*}
m_J(a,b) \equiv \big(\prod_{\substack{a,b,5 \in T^c, \\ 6 \in T, \\ J \cup \{5\} \nsupseteq T}} x_T \big) \big(\prod_{\substack{a, 5 \in T^c, \\b, 6 \in T}} x_T - \prod_{\substack{a,6 \in T^c, \\b, 5 \in T}} x_T \big) + I_{pl},
\end{gather*}
which is in the image of $i$, since for each term $x_T$, $T$ either contains $5$ or $6$, but never both or neither.
\end{proof}

It is not hard to prove via methods used in this section that relations also exist in certain degrees involving Keel-Vermeire classes arising from relations described in Example 6.6 of \cite{hilbChow}. In future work, we plan to link results of this paper with the graded Betti numbers approach of \cite{MR2529093} to determine if all relations in $\cox(\M_{0,6})$ reside in these degrees, Pl\"ucker degrees, or the degrees determined in \cite{hilbChow}.

\bibliographystyle{alpha}

\bibliography{../../../bibliography.bib}

\def\cprime{$'$}
\begin{thebibliography}{BCHM10}

\bibitem[AC98]{MR1733327}
Enrico Arbarello and Maurizio Cornalba.
\newblock Calculating cohomology groups of moduli spaces of curves via
  algebraic geometry.
\newblock {\em Inst. Hautes {\'E}tudes Sci. Publ. Math.}, (88):97--127 (1999),
  1998.

\bibitem[BB09]{blume1}
Victor Batyrev and Mark Blume.
\newblock The functor of toric varieties associated with {W}eyl chambers and
  {L}osev-{M}anin moduli spaces.
\newblock {\em arXiv math.AG:0911.3607}, 2009.

\bibitem[BCHM10]{MR2601039}
Caucher Birkar, Paolo Cascini, Christopher~D. Hacon, and James McKernan.
\newblock Existence of minimal models for varieties of log general type.
\newblock {\em J. Amer. Math. Soc.}, 23(2):405--468, 2010.

\bibitem[BM10]{brunoMella}
Andrea Bruno and Massimiliano Mella.
\newblock The automorphism group of $\overline{M}\sb{0,n}$.
\newblock {\em arXiv math.AG:1006.0987}, 2010.

\bibitem[Bot56]{MR0090730}
Raoul Bott.
\newblock On the iteration of closed geodesics and the {S}turm intersection
  theory.
\newblock {\em Comm. Pure Appl. Math.}, 9:171--206, 1956.

\bibitem[BP04]{MR2029863}
Victor~V. Batyrev and Oleg~N. Popov.
\newblock The {C}ox ring of a del {P}ezzo surface.
\newblock In {\em Arithmetic of higher-dimensional algebraic varieties ({P}alo
  {A}lto, {CA}, 2002)}, volume 226 of {\em Progr. Math.}, pages 85--103.
  Birkh{\"a}user Boston, Boston, MA, 2004.

\bibitem[Cas09]{MR2491903}
Ana-Maria Castravet.
\newblock The {C}ox ring of {$\overline M\sb{0,6}$}.
\newblock {\em Trans. Amer. Math. Soc.}, 361(7):3851--3878, 2009.

\bibitem[CHSW10]{mustafin}
Dustin Cartwright, Mathias H\"abich, Bernd Sturmfels, and Annette Werner.
\newblock Mustafin varieties.
\newblock {\em arXiv math.AG:1002.1418}, 2010.

\bibitem[CLS10]{cls}
David Cox, John Little, and Hal Schenck.
\newblock {\em Toric {V}arieties}.
\newblock {A}vailable at http://www.cs.amherst.edu/$\sim$dac/toric.html, 2010.

\bibitem[Cox95]{MR1299003}
David~A. Cox.
\newblock The homogeneous coordinate ring of a toric variety.
\newblock {\em J. Algebraic Geom.}, 4(1):17--50, 1995.

\bibitem[CT06]{MR2278756}
Ana-Maria Castravet and Jenia Tevelev.
\newblock Hilbert's 14th problem and {C}ox rings.
\newblock {\em Compos. Math.}, 142(6):1479--1498, 2006.

\bibitem[CT08]{ct}
Ana-Maria Castravet and Jenia Tevelev.
\newblock Exceptional loci on $\overline{M}\sb{0,n}$ and hypergraph curves.
\newblock {\em arXiv math.AG:0809.1699v1}, 2008.

\bibitem[Ful93]{MR1234037}
William Fulton.
\newblock {\em Introduction to toric varieties}, volume 131 of {\em Annals of
  Mathematics Studies}.
\newblock Princeton University Press, Princeton, NJ, 1993.
\newblock The William H. Roever Lectures in Geometry.

\bibitem[Ful98]{MR1644323}
William Fulton.
\newblock {\em Intersection theory}, volume~2 of {\em Ergebnisse der Mathematik
  und ihrer Grenzgebiete. 3. Folge. A Series of Modern Surveys in Mathematics
  [Results in Mathematics and Related Areas. 3rd Series. A Series of Modern
  Surveys in Mathematics]}.
\newblock Springer-Verlag, Berlin, second edition, 1998.

\bibitem[GKM02]{MR1887636}
Angela Gibney, Sean Keel, and Ian Morrison.
\newblock Towards the ample cone of {$\overline M\sb {g,n}$}.
\newblock {\em J. Amer. Math. Soc.}, 15(2):273--294 (electronic), 2002.

\bibitem[GM07]{hilbChow}
Angela Gibney and Diane Maclagan.
\newblock Equations for {C}how and {H}ilbert quotients.
\newblock {\em arXiv math.AG:0707.1801v2}, 2007.

\bibitem[Har77]{MR0463157}
Robin Hartshorne.
\newblock {\em Algebraic geometry}.
\newblock Springer-Verlag, New York, 1977.
\newblock Graduate Texts in Mathematics, No. 52.

\bibitem[Has03]{MR1957831}
Brendan Hassett.
\newblock Moduli spaces of weighted pointed stable curves.
\newblock {\em Adv. Math.}, 173(2):316--352, 2003.

\bibitem[HK00]{MR1786494}
Yi~Hu and Sean Keel.
\newblock Mori dream spaces and {GIT}.
\newblock {\em Michigan Math. J.}, 48:331--348, 2000.
\newblock Dedicated to William Fulton on the occasion of his 60th birthday.

\bibitem[Kap93a]{MR1237834}
M.~M. Kapranov.
\newblock Chow quotients of {G}rassmannians. {I}.
\newblock In {\em I. {M}. {G}el\cprime fand {S}eminar}, volume~16 of {\em Adv.
  Soviet Math.}, pages 29--110. Amer. Math. Soc., Providence, RI, 1993.

\bibitem[Kap93b]{MR1203685}
M.~M. Kapranov.
\newblock Veronese curves and {G}rothendieck-{K}nudsen moduli space {$\overline
  M\sb {0,n}$}.
\newblock {\em J. Algebraic Geom.}, 2(2):239--262, 1993.

\bibitem[Kee92]{MR1034665}
Sean Keel.
\newblock Intersection theory of moduli space of stable {$n$}-pointed curves of
  genus zero.
\newblock {\em Trans. Amer. Math. Soc.}, 330(2):545--574, 1992.

\bibitem[Knu83]{MR702953}
Finn~F. Knudsen.
\newblock The projectivity of the moduli space of stable curves. {II}. {T}he
  stacks {$M\sb{g,n}$}.
\newblock {\em Math. Scand.}, 52(2):161--199, 1983.

\bibitem[KV07]{MR2262630}
Joachim Kock and Israel Vainsencher.
\newblock {\em An invitation to quantum cohomology}, volume 249 of {\em
  Progress in Mathematics}.
\newblock Birkh{\"a}user Boston Inc., Boston, MA, 2007.
\newblock Kontsevich's formula for rational plane curves.

\bibitem[Lar10]{larsenThesis}
Paul Larsen.
\newblock {\em Applied Mori theory of the moduli space of stable pointed
  rational curves}.
\newblock PhD thesis, Humboldt-Universit\"at zu Berlin, 2010.

\bibitem[Laz04]{MR2095471}
Robert Lazarsfeld.
\newblock {\em Positivity in algebraic geometry. {I}}, volume~48 of {\em
  Ergebnisse der Mathematik und ihrer Grenzgebiete. 3. Folge. A Series of
  Modern Surveys in Mathematics [Results in Mathematics and Related Areas. 3rd
  Series. A Series of Modern Surveys in Mathematics]}.
\newblock Springer-Verlag, Berlin, 2004.
\newblock Classical setting: line bundles and linear series.

\bibitem[Li09]{MR2595553}
Li~Li.
\newblock Wonderful compactification of an arrangement of subvarieties.
\newblock {\em Michigan Math. J.}, 58(2):535--563, 2009.

\bibitem[LM00]{MR1786500}
A.~Losev and Y.~Manin.
\newblock New moduli spaces of pointed curves and pencils of flat connections.
\newblock {\em Michigan Math. J.}, 48:443--472, 2000.
\newblock Dedicated to William Fulton on the occasion of his 60th birthday.

\bibitem[LV09]{MR2529093}
Antonio Laface and Mauricio Velasco.
\newblock Picard-graded {B}etti numbers and the defining ideals of {C}ox rings.
\newblock {\em J. Algebra}, 322(2):353--372, 2009.

\bibitem[Muk05]{mukai}
Shigeru Mukai.
\newblock Finite generation of the {N}agata invariant rings in {A}-{D}-{E}
  cases.
\newblock {\em {RIMS} preprint}, (1502), 2005.

\bibitem[Ver02]{MR1882122}
Peter Vermeire.
\newblock A counterexample to {F}ulton's conjecture on {$\overline M\sb
  {0,n}$}.
\newblock {\em J. Algebra}, 248(2):780--784, 2002.

\end{thebibliography}

\end{document}